\newtheorem{lemma}{Lemma}
\newtheorem{algorithm}{Algorithm}
\newtheorem{definition}{Definition}
\newtheorem{corollary}{Corollary}
\newtheorem{Theorem}{Theorem}
\newtheorem*{Theorem*}{Theorem}
\newtheorem{Example}{Example}
\DeclareMathOperator{\Sym}{Sym}
\DeclareMathOperator{\bSym}{\mathbb{S}ym}
\DeclareMathOperator{\Hh}{\mathcal{H}}
\DeclareMathOperator{\Rad}{\bf{Rad}}
\DeclareMathOperator{\BG}{\bf{G}}
\title{Contracting The Well-Rounded Retract}
\author{Oliver Gjoneski}
\date{}
\begin{document}
\maketitle

\begin{abstract} In this paper we present a method for contracting the well-rounded retract for $GL_2$ and $GL_3$ with a forward look to generalizing this approach in higher rank.  We also present an application of this result in computing cohomology groups with coefficients, and announce forthcoming results in this field.
\end{abstract}

\section*{Introduction}
Invariant spines have emerged as an important tool in the study of arithmetic varieties.  More specifically, in \cite{MR747876}, and more recently in \cite{MR1480546}, \cite{MR2630015}, authors have successfully demonstrated how one can use these objects to investigate the cohomology of locally symmetric spaces.

We would like to make use of these spines in order to develop a computationally efficient framework for investigating Eilenberg-MacLane group cohomology (more about this in \S \ref{Section:Applications}). As presented in \S\ref{Contraction to Filling}, an immediate obstacle in doing so is finding an algorithm to contract these spines in finite time.

\subsection{Spines}\label{Subsection:IntroSpines} In this paper we consider spaces $X = \Gamma \backslash D$, where $D$ is a non-compact globally symmetric space, and $\Gamma$ is a discrete group of automorphisms.  An active area of research is finding a deformation retract $D_0 \subset D$ of dimension equal to the virtual cohomological dimension of $D$, such that $\Gamma \backslash D_0$ is compact (see \cite{MR2305611}).  Such a subset $D_0$ is called a \emph{spine} of $D$.  

To illustrate how spines arise in a concrete setting, consider the case when $D_m$ is the space of positive definite quadratic forms on $\mathbb{R}^m.$  Elements of the arithmetic group $\gamma \in \Gamma = GL_m(\mathbb{Z})$ act on this space on the left via:
$$Q \mapsto \left(\,^t\gamma\right)^{-1}Q\gamma^{-1},$$
giving rise to a family of non-compact, locally symmetric spaces $X_m = GL_m(\mathbb{Z}) \backslash D_m.$  As early as 1907, Voronoi used the theory of perfect quadratic forms to cellulate the space $D_m$ in such a way that $GL_m(\mathbb{Z})$ acts cellularly and partitions the collection of cells in finitely many equivalence classes.  In this setting, we can introduce the well-rounded retract $W_m$ of $D_m$ as an intersection dual to the Voronoi complex (see Figure \ref{fig:WRRandVoronoi}).  It is a lower-dimensional subset of $D_m$ that is invariant under the action of $GL_m(\mathbb{Z}).$  As shown in \cite{MR747876}, the well-rounded retract is a spine of $D_m.$
\begin{figure}
\begin{center}
\includegraphics[scale=0.5,trim=5mm 12mm 0mm 80mm, clip]{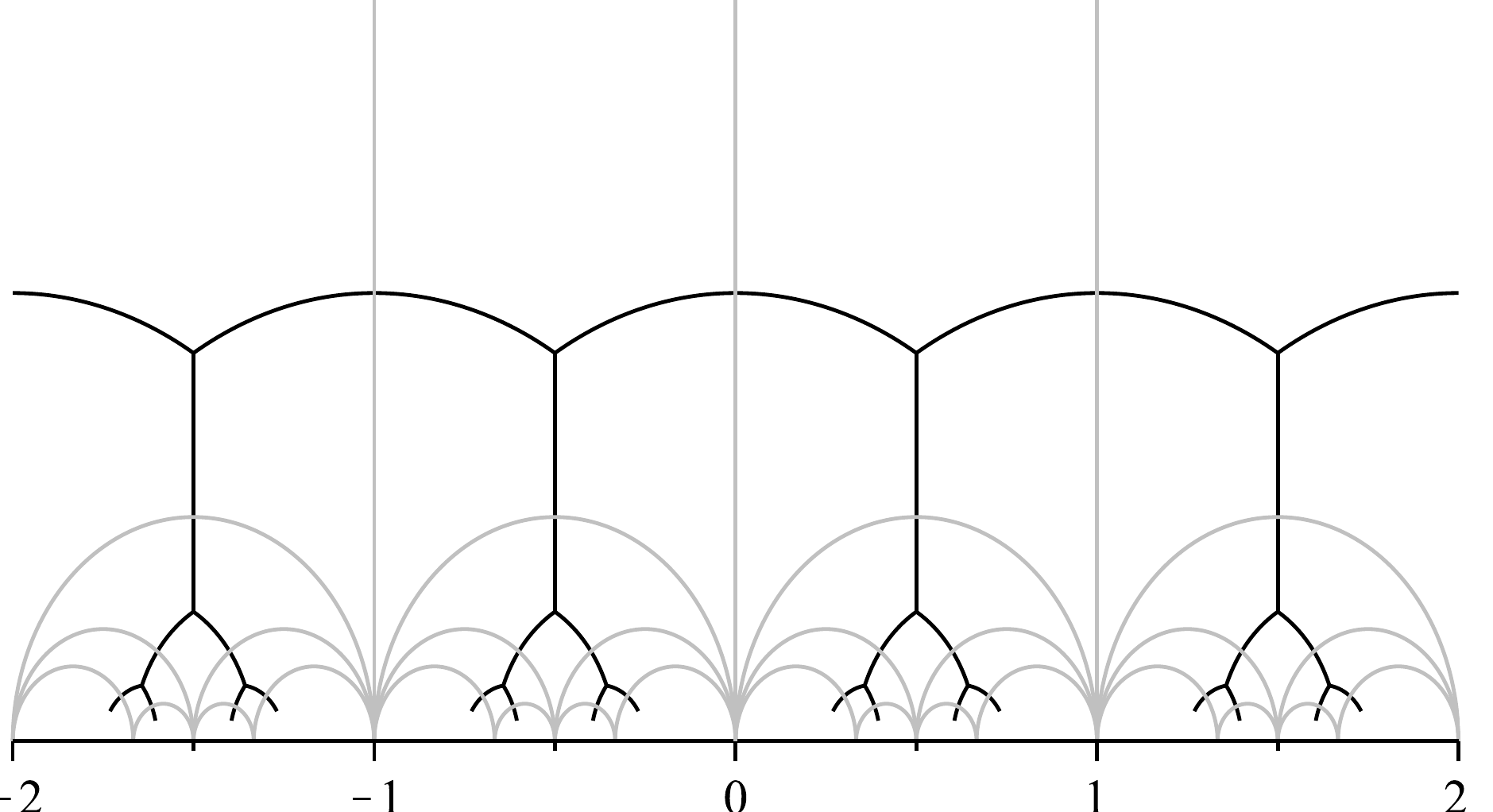} \caption{The well-rounded retract (black) and the Voronoi complex (grey) in the upper half-plane.}
\label{fig:WRRandVoronoi}
\end{center}
\end{figure}
Top-dimensional cells in the Voronoi complex, and by duality vertices in the well-rounded retract, are in one-to-one correspondence with perfect quadratic forms.  Therefore, being able to manipulate the well-rounded retract hinges on our ability to classify perfect quadratic forms on $\mathbb{R}^m$.  The number of $GL_m(\mathbb{Z})$-equivalence classes grows rapidly with $m$, with more than 10000 non-equivalent perfect quadratic forms for $m=8$ already (\cite{MR2289048}). The task of classifying all cells in the well-rounded retract, not just vertices, is an even more daunting task, with conclusive results available only up to $m = 6$ (\cite{MR1931508}).

\subsection{The problem}Contracting the well-rounded retract, a deformation retract of a contractible space, is of course possible.  What is not immediately obvious however, is how to contract $W_m$ algorithmically.  Furthermore, we aim to find a contraction that is, in a sense, invariant with respect to the action of $GL_3(\mathbb{Z})$ (for details on the type of invariance we are trying to impose, see part (2) of Theorem \ref{Theorem:MainTheorem}).   In $D_2$, where the well-rounded retract is topologically equivalent to a tri-valent tree, this problem is misleadingly easy to tackle.  In fact, when thinking of $W_2$ as the Bruhat-Tits building for $SL_2(\mathbb{Q}_2)$, there is a natural notion of shortest distance, and consequently, a contraction of the building itself along paths of shortest distance.  However, as early as $W_3 \subset D_3$ we discover that notions such as shortest distance between cells, do not generalize easily.  In fact, there is no convenient building structure to aid us in contracting this cell complex.  In this paper we present an algorithm for a very specific and combinatorial contraction of the well-rounded retract for $GL_2$ and $GL_3$, with a forward view to generalizing this algorithm to all cases where the cells of the well-rounded retract have been classified up to the action of the full arithmetic subgroup.

\subsection{Layout}In Section 1 of this paper we fix some of the notation used throughout.  In Section 2 we provide background information on the well-rounded retract, and in Section \ref{AptsAndWRR} we introduce a point of tangency between the well-rounded retract and a spherical building related to the Borel-Serre stratification of the globally symmetric space.  In Section 3, we present the details of the contraction algorithm.  We conclude the paper with Section 4, where we discuss applications of the contraction algorithm and announce forthcoming results pertaining to the cohomology of $GL_2(\mathbb{Z})$ and $GL_3(\mathbb{Z}).$

\section{Notation}
Despite the fact that in this paper we focus on $GL_m$, we present the symmetric space notation for the general case.  Namely, we use $\bf{G}$ to denote a connected, reductive, algebraic group
defined over $\mathbb{Q}$, and let $G$ be the group of real points, $\bf{G}(\mathbb{R})$.  We note that the radical of $\bf{G}$, $\Rad(\bf{G})$ is also defined over $\mathbb{Q}$, and we require $\bf{G}$ modulo $\Rad(\BG)$ to have
a strictly positive $\mathbb{Q}$-rank.  By this we mean that the dimension of the maximal $\mathbb{Q}$-split algebraic
torus of $\BG \slash \Rad(\BG)$ is strictly greater than zero.

For $\BG = GL_m$ we use $(\rho_m, V_m)$ to denote the standard representation of $GL_m(\mathbb{C})$, and $(\rho_m^n, \Sym^n(V_m))$ for the $n$-th symmetric product of the standard representation.

Let $\bf{A}_{\BG}$ denote the maximal $\mathbb{Q}$-split torus in the center $\bf{Z}_{\BG}$ of $\BG$, and $A_{\BG}$ the identity component of $\bf{A}_{\BG}(\mathbb{R})$.  We use
$K \subset G$ to denote a maximal compact subgroup, and let
$$D_{G} = G\slash K A_{\BG},$$
be the associated globally symmetric space.  For $\Gamma,$ an arithmetic subgroup of $\BG(\mathbb{Q}),$ we use $\,_{\Gamma}X_G$ to denote the corresponding locally symmetric space, namely,
$$\,_{\Gamma}X_G = \Gamma \backslash D_{G}.$$  When it is clear from context that $\BG = GL_m,$ and $\Gamma=GL_m(\mathbb{Z})$ we will abbreviate notation and use $D_m$ and $X_m$, as in \S \ref{Subsection:IntroSpines}, to denote $D_{G}$ and $\,_{\Gamma}X_G$, respectively.

Finally, when discussing the sheaf cohomology groups $H^\bullet(\,_{\Gamma}X_G, \mathbb{V}),$ we use $\mathbb{V}$ for the local system on $\,_{\Gamma}X_G$ associated to the $\Gamma$-module $(\rho,V)$.  More specifically, if we let $\mu:D_G \rightarrow \,_{\Gamma}X_G$ be the standard projection, then $\mathbb{V}$ is the locally constant sheaf defined on an open set $U \in \,_{\Gamma}X_G$ as
$$\mathbb{V}(U) = \left\{s:\mu^{-1}(U) \rightarrow V \;\middle\vert\; \begin{matrix}s \text{ is locally constant}\\s(\gamma x) = \gamma . s(x), \gamma \in \Gamma\end{matrix}\right\}.$$
In Section \ref{Section:Applications}, we will find it beneficial to realize the groups $H^\bullet\left(X_m, \bSym^n(V_m)\right)$ via differential forms.  In particular, one can show that these can be computed as the cohomology groups of the complex 
$$\Omega^\bullet\left(GL_m(\mathbb{Z})\backslash D_m, \bSym^n(V_m)\right) = \left\{\omega \in \Omega^\bullet\left(D_m, \Sym^n(V_m)\right) \;\middle\vert\; \begin{matrix}L_{\gamma}^*\omega=\rho(\gamma)\omega\\ \gamma \in GL_m(\mathbb{Z}\end{matrix}\right\}.$$
\section{The Well-Rounded Retract}
Let $G = GL_m(\mathbb{R}),$ $K = O_m(\mathbb{R}), \Gamma = GL_m(\mathbb{Z})$ and $A_G$ be the group of scalar matrices corresponding to the positive real homotheties.  Let
$$L_0 := \left\{a_1e_1 + a_2e_2 + \dots + a_ne_n \mid a_i \in \mathbb{Z}, i = 0, 1, \dots, n \right\} \cong \mathbb{Z}^m,$$
where $\{e_i\}_{i=1}^m$ is the standard basis in $\mathbb{R}^m$.

Throughout, it will be useful to think of $D_m$ as both the space of positive definite quadratic forms on $\mathbb{R}^m$ modulo homothety, and as the space of marked lattices in $\mathbb{R}^m$ modulo rotation and homothety. 

Namely, for each $g \in G$, we define the \emph{associated quadratic form} as,
$$Q_g = (\,^tg)^{-1} g^{-1}.$$
We note that $Q_g = Q_{gk}.$  Keeping in mind that a positive definite symmetric matrix can be diagonalized using orthogonal matrices yielding a $(\,^tg)^{-1} g^{-1}$ decomposition, it is apparent that the space of positive definite quadratic forms modulo homothety can be identified with $D_m.$

Similarly, for $g \in G,$ we can define the \emph{corresponding marked lattice} as,
\begin{align*}
f_g:L_0 &\rightarrow \mathbb{R}^n \\
      v &\mapsto g^{-1} v.
\end{align*}
In addition, we identify two marked lattices that differ by a homothety, namely, $\tilde{f} \sim f$ whenever $\tilde{f} = a \cdot f$, where $a \in A_G.$  Therefore we have established the following relationship,

\begin{align*}\left\{\begin{matrix}\text{Positive definite quadratic forms} \\ \text{modulo homothety}\end{matrix}\right\}\hspace{0.1 in} \cong\hspace{0.1 in} &G\slash KA_G \hspace{0.1 in}\cong\hspace{0.1 in} \left\{\begin{matrix}\text{Marked lattices} \\ \text{modulo rotation}\end{matrix}\right\}\\
Q_g= (\,^tg)^{-1} g^{-1}\mathrel{\raisebox{.2em}{\rotatebox[origin=c]{180}{$\longmapsto$}}} \hspace{0.15 in} &gKA_G \hspace{0.2 in}\longmapsto [f_g].
\end{align*}
For a positive definite quadratic form $Q_g$ fixed within its homothety class, we define the \emph{arithmetic minimum} of $Q_g$ as,
$$m_g = m(Q_g) = \text{min}\left\{\sqrt{Q_g(v)} \;\middle\vert\; v \in L_0, v \neq 0\right\}.$$   The set of \emph{minimal vectors} of $Q_g$ is defined to be 
$$M_g = M(Q_g)=\left\{v \in L_0 \;\middle\vert\; \sqrt{Q_g(v)} = m_g\right\}\slash \sim,$$
where $v_1 \sim v_2 \iff v_1 = \pm v_2.$  Hereafter, we abuse notation and write $v \in M_g$, when we mean $[v] \in M_g.$ Equivalently, following the exposition in \cite{MR1480546}, we can define these as,
$$m_g =m(f_g)= \text{min}\left\{|f_g(v)|\;\middle\vert\; v \in L_0, v \neq 0\right\},$$ $$M_g = M(f_g)=\left\{v \in L_0 \;\middle\vert\; |f_g(v)| = m_g\right\} \slash \sim ,$$
where $| \cdot |$ is the norm with respect to the standard Euclidean inner product.  

It is an easy exercise to show that for $m_g,M_g$ as above:
\begin{enumerate}
 \item The quantity $m_g$ is finite and positive for all $g \in G.$
 \item The set $M_g$ is finite and non-empty.
 \item For $v = (v_1, \dots, v_m)^t \in M_g, \gcd(v_1, \dots, v_m) = 1.$
\end{enumerate}

\begin{definition}
Integral vectors in $\mathbb{R}^m$ having coefficients that are mutually prime are called \emph{primitive}.
\end{definition}
The sum of our previous observations yields that for $g\in G,$ the vectors in $M_g$ are all primitive.

\begin{definition}
 A marked lattice $f_g$ is \emph{well-rounded} if $M_g$ spans $L_0$ as a $\mathbb{Z}$-module.
\end{definition}
\begin{definition}
The \emph{well-rounded retract} in $X_m$ is the set $W_m$ of well-rounded elements.
\end{definition}
We note that $\Gamma$ acts on $W_m$.  More specifically, let $g \in G$ be such that $f_g$ is well-rounded with $\{v_i\}_{i=1}^{m} \in M(f_g)$ spanning $L_0$ as a $\mathbb{Z}$-module.  For $\gamma \in \Gamma,$ $\{\gamma v_i\}_{i=1}^{m}$ is a set of minimal vectors for $f_{\gamma g}$ of maximal rank.  Therefore, $\gamma g$ gives rise to a well rounded marked lattice.

Finally, the importance of the well-rounded retract is summarized in the following theorem, which pertains to a map $r:[0,1] \times D_m \rightarrow D_m$ first introduced by Ash in \cite{MR747876}:
\begin{Theorem*}[\cite{MR747876}]
The map $r$ is a $GL_m(\mathbb{Z})$-invariant deformation retraction of $D_m$ onto $W_m.$
\end{Theorem*}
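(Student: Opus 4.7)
The plan is to construct the map $r$ explicitly as a canonical, piecewise-smooth flow on $D_m$ and then verify three properties: continuity, the deformation-retract conditions, and $GL_m(\mathbb{Z})$-equivariance. First I would build $r$ stratum by stratum. For $g \in G$ representing a non-well-rounded marked lattice, set $V_g = \mathrm{span}_{\mathbb{R}}(M_g) \subset \mathbb{R}^m$ and let $U_g \subset \mathbb{R}^m$ be its orthogonal complement with respect to $Q_g$. Using the $Q_g$-orthogonal projections $\pi_V, \pi_U$, define a one-parameter family of quadratic forms
\[
Q_g^{(s)}(v) = Q_g(\pi_V v) + e^{-2s}\, Q_g(\pi_U v), \qquad s \geq 0.
\]
This leaves $Q_g$ unchanged on $V_g$ (so the original minimal vectors remain minimal), while uniformly contracting in the $U_g$ direction. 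At a first finite time $s^{*}(g)$ some $v \in L_0 \setminus \mathrm{span}_{\mathbb{Z}}(M_g)$ reaches norm $m_g$; at that instant the flow is stopped, the set of minimal vectors strictly grows, and one restarts the flow with the enlarged $V_g$. Since $\dim V_g$ strictly increases at every stage, the procedure terminates after at most $m$ steps with $V_g = \mathbb{R}^m$. A suitable reparametrization compresses the entire piecewise flow into $[0,1]$, producing $r$.

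Next I would verify the deformation-retract conditions. By construction $r(0, \cdot) = \mathrm{id}$ and $r(\{1\} \times D_m) \subset W_m$, since the iteration terminates exactly upon reaching the well-rounded stratum. For $g$ already in $W_m$ one has $U_g = 0$, the flow is stationary, and $r(t, g) = g$ for all $t$. The $GL_m(\mathbb{Z})$-equivariance follows from the canonical nature of every ingredient in the construction: for $\gamma \in \Gamma$ the marked lattice associated to $\gamma g$ satisfies $M_{\gamma g} = \gamma \cdot M_g$, from which $V_{\gamma g} = \gamma \cdot V_g$, $U_{\gamma g} = \gamma \cdot U_g$, and the prescribed one-parameter family intertwines with the $\Gamma$-action.

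The main obstacle will be verifying continuity of $r$ across the stratum boundaries where $\dim V_g$ jumps. When a perturbation of $g$ causes a near-minimal vector to become truly minimal, both $V_g$ and $U_g$ change discontinuously as functions of $g$, and the prescribed velocity field jumps accordingly. One nonetheless expects $r$ itself to be continuous, because at such jump points the \emph{old} flow direction must already have become tangent to the \emph{new} higher-dimensional stratum; otherwise the flow would have already violated the arithmetic-minimum constraint. Making this matching precise requires a careful analysis at the stratum interface, which in \cite{MR747876} is accomplished by exhibiting $r$ as an antigradient flow of a piecewise-smooth function built from the logarithm of the arithmetic minimum. I would follow that strategy, checking that the gradient vectors on neighbouring strata agree on the shared boundary and applying a standard continuity-of-flow argument for Lipschitz vector fields.
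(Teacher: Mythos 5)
First, a point of order: the paper does not prove this statement. It is quoted from Ash \cite{MR747876} as an external input, so there is no internal proof to compare yours against; your proposal has to stand on its own as a reconstruction of Ash's argument. In outline it does: the stratum-by-stratum flow that fixes the form on $V_g=\mathrm{span}_{\mathbb{R}}(M_g)$ and contracts its $Q_g$-orthogonal complement, restarting each time the minimal set grows, is essentially the construction of Ash and Soul{\'e}, and your verifications of the retract conditions and of $\Gamma$-equivariance (via $M_{\gamma g}=\gamma M_g$, hence $V_{\gamma g}=\gamma V_g$ and $U_{\gamma g}=\gamma U_g$) are correct and descend to $D_m$ because every ingredient is homothety- and rotation-invariant. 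One mismatch with this paper's conventions: your termination argument only shows the flow stops once $M_g$ spans $\mathbb{R}^m$, whereas the definition of well-rounded used here demands that $M_g$ generate $L_0$ as a $\mathbb{Z}$-module. The two notions agree in the ranks the paper actually uses ($m=2,3$) but not for all $m$, so as written your $r$ retracts onto the $\mathbb{R}$-span version of $W_m$.

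The genuine gap is exactly where you flag it: continuity across the interfaces where $\dim V_g$ jumps, together with continuity in $g$ of the reparametrization compressing the piecewise flow into $[0,1]$. The heuristic you offer --- that the old velocity field ``must already have become tangent to the new stratum'' --- is not the mechanism that saves the construction and would be hard to make precise, since the velocity fields on the two sides of the interface genuinely disagree. What actually happens is that for $g'$ near a point $g$ at which an extra vector $v$ is minimal, $v$ is only barely non-minimal for $g'$, so the first hitting time $s^{*}(g')$ tends to $0$ as $g'\to g$; the ``wrong'' flow is followed for a vanishing amount of time and the composite trajectory converges to that of $g$. Proving this requires showing that locally only finitely many lattice vectors compete for the arithmetic minimum (so $s^{*}$ is a minimum of finitely many continuous functions of $g$, hence continuous and locally bounded), and then an induction on the number of stages. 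Your fallback --- exhibiting $r$ as an antigradient flow of a function built from the logarithm of the arithmetic minimum --- is a legitimate later point of view on this retraction, but it is not obviously what \cite{MR747876} does, and invoking it does not discharge the continuity check; it merely replaces it with the comparable problem of showing that this piecewise-smooth, non-Lipschitz-at-the-interface potential has a well-defined continuous flow. Either way, the matching at the stratum interface is the real content of the theorem and still needs to be written down.
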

\subsection{Visualizing the well-rounded retract for $GL_2$ and $GL_3$}
In \cite{MR0470141} and \cite{MR747876}, Soul{\'e} and Ash present a cellular decomposition of the well-rounded retract which allows us to visualize the spine in low rank.  In particular, each cell is determined by a set of minimal integral vectors shared by all quadratic forms in the cell. Therefore, for $D_m,$ the top-dimensional cells in the well-rounded retract are decorated by $m$-integral, primitive vectors that span $L_0$ as a $\mathbb{Z}$-module.  The cells of dimension one less, are decorated by $(m+1)$ such vectors, and the list continues on.  Each cell is a closed, convex linear set in the globally symmetric space.  To see this, note that if the quadratic forms $Q_1$ and $Q_2$ share $v \in L_0$ as a minimal vector then, for $0\le\lambda\le1,$ it is certainly the case that
$$Q_\lambda = \lambda Q_1 + (1-\lambda)Q_2,$$
also has $v$ as a minimal vector.
\subsection{The well-rounded retract for $GL_2$}
Let $g = \left( \begin{matrix} a & b \\ c & d \end{matrix}\right) \in GL_2(\mathbb{R}).$  Using standard manipulation, we may factor
$$ g = \left( \begin{matrix} y^{\frac{1}{2}} & xy^{-\frac{1}{2}} \\ 0 & y^{-\frac{1}{2}} \end{matrix}\right) \left(\begin{matrix}\lambda & 0 \\ 0 & \lambda \end{matrix}\right) k,$$
where $k \in O_2(\mathbb{R})$, and $y,\lambda > 0$.  Making use of the action of $SL_2(\mathbb{R})$ on the upper half-plane by fractional linear transformations, we can identify $gKA_G$ with the point $x + iy$ in the upper half plane $\Hh^+$.

On the other hand, $g$ gives rise to a marked lattice $f_g : L_0 \rightarrow \mathbb{R}^2$, where $v \mapsto g^{-1}v.$  We note that since, $$g^{-1} = k^{-1}\left(\begin{matrix}\lambda & 0 \\ 0 & \lambda \end{matrix}\right)^{-1}\left(\begin{matrix}y^{-\frac{1}{2}} & 0 \\ 0 & y^{-\frac{1}{2}}\end{matrix}\right) \left( \begin{matrix} 1 &- x \\ 0 & y\end{matrix}\right),$$ modulo homothety and rotation, the marked lattice associated to  the coset $gKA_G$ is given by $v \mapsto \left( \begin{matrix} 1 & -x \\ 0 & y\end{matrix}\right)v.$  In order to be consistent with the coordinates on $G \slash KA_G$ introduced previously, we again identify this coset with the point $x + iy.$  We note that, we can read off all relevant information about the lattice $f_g$ in this manner.  Namely $\left(\begin{matrix}1\\0\end{matrix}\right)$ is fixed, and $\left(\begin{matrix}0\\1\end{matrix}\right)$ is mapped to $-x + iy$.  Therefore, we can use coordinates on the upper half-plane to identify $D_m$ when thought of as the space of marked lattices modulo homothety, as in \cite{MR1480546}.  Note that, the point $z = \alpha + i \beta$ with $\beta>0,$ represents the marked lattice:
$$f:L_0 \longrightarrow \mathbb{R}^2$$
$$\left(\begin{matrix}1\\0\end{matrix}\right) \mapsto \left(\begin{matrix}1\\0\end{matrix}\right),$$
$$\left(\begin{matrix}0\\1\end{matrix}\right) \mapsto \left(\begin{matrix} -\alpha \\ \beta \end{matrix}\right).$$
This identification between points in the upper half-plane and equivalence classes of marked lattices allows us to visualize the well-rounded retract.  In Figure \ref{fig:UpperHalfPlane}, each one-dimensional cell in the well-rounded retract is decorated with the minimal vectors for the marked lattices corresponding to points in that cell.

\begin{figure}[h]
\begin{center}
\includegraphics[trim=40mm 175mm 40mm 25mm, clip]{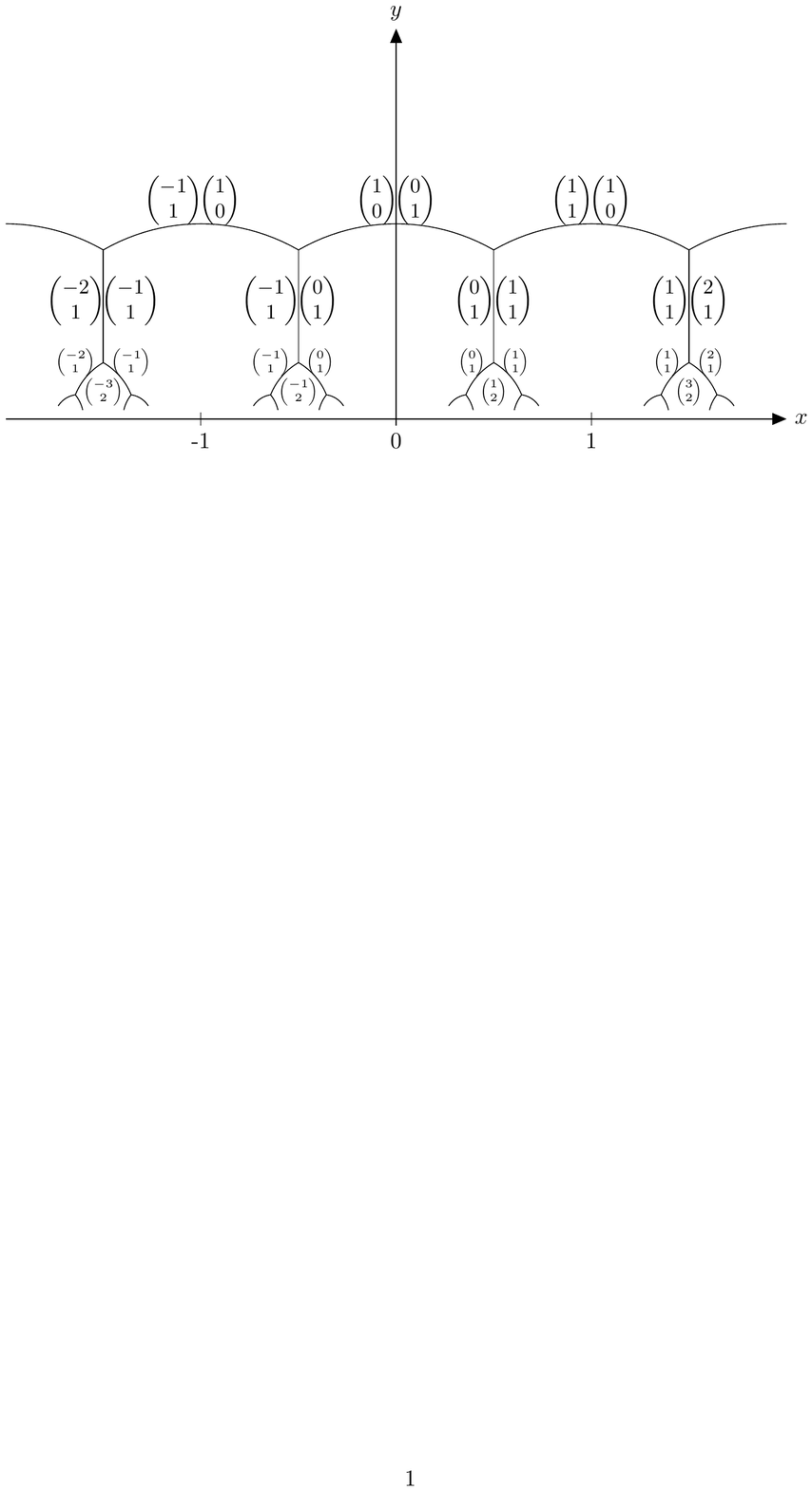} \caption{Well-rounded retract in the upper half-plane.}
\label{fig:UpperHalfPlane}
\end{center}
\end{figure}

This illustration also contains all information necessary to decorate each 0-cell in the well-rounded retract with a set of minimal vectors.  In particular, the 0-cell given as the intersection between the three $1$-cells decorated with,
$$\left\{\left(\begin{matrix}1\\0 \end{matrix}\right), \left(\begin{matrix}0\\1 \end{matrix}\right)\right\}, \left\{\left(\begin{matrix}1\\0 \end{matrix}\right), \left(\begin{matrix}1\\1 \end{matrix}\right)\right\}, \left\{\left(\begin{matrix}0\\1 \end{matrix}\right), \left(\begin{matrix}1\\1 \end{matrix}\right)\right\},$$
represents a marked lattice whose set of minimal vectors is given as the union of the three sets above,
$$\left\{\left(\begin{matrix}1\\0 \end{matrix}\right),\left(\begin{matrix}0\\1 \end{matrix}\right), \left(\begin{matrix}1\\1 \end{matrix}\right)\right\}.$$
Furthermore, once we have decorated the well-rounded retract, we can use the above illustration to visualize the action of $\Gamma$ on the individual cells, even without knowing the action of $\Gamma$ on the ambient space.  Namely, we make use of the fact that the action on marked lattices can be thought of as an action on minimal vectors, as explained previously.  Therefore, it follows that the element $\left(\begin{matrix}0 & -1 \\ 1 & 0 \end{matrix}\right) \in \Gamma,$ sends the 1-cell decorated with $\left\{\left(\begin{matrix}1\\0 \end{matrix}\right), \left(\begin{matrix}1\\1 \end{matrix}\right)\right\}$ to the one decorated with
$$\left(\begin{matrix}0 & -1 \\ 1 & 0 \end{matrix}\right)\left\{\left(\begin{matrix}1\\0 \end{matrix}\right), \left(\begin{matrix}1\\1 \end{matrix}\right)\right\} = \left\{\left(\begin{matrix}0\\1 \end{matrix}\right), \left(\begin{matrix}-1\\1 \end{matrix}\right)\right\}.$$
This is easily verified using what we know about the action of $SL_2(\mathbb{R})$ on the upper half-plane.
As a final note, we define the \emph{fundamental arc} of the well-rounded retract in the upper half-plane to be the set of points,
$$\{x+iy \mid x^2 + y^2 = 1, -1\slash2\le x \le 1\slash2\}.$$
\subsection{The well-rounded retract for $GL_3$} \label{Subsection:GL3}
Picturing the well-rounded retract in rank two, a three dimensional spine embedded in five dimensional space, is a more challenging task. Here however, we can lean on the work in \cite{MR0470141}, where by using Euclidean coordinates on the space of quadratic forms, Soul{\'e} offers an illustration of a single top-dimensional cell in the well-rounded retract for $GL_3.$  Namely, we identify the quadratic form
$$Q(u, v, w)= \left(\begin{matrix}2 & w & v \\ w & 2 & u \\ v & u & 2\end{matrix}\right),$$
with the triplet $(u, v, w).$  As described before, each cell is a convex linear set, and using the coordinates $(u, v, w)$ in Figure \ref{fig:SouleCubeMapleDecorated} we can visualize the unique three cell in the well-rounded retract containing the equivalence class of the quadratic form given by the identity matrix.

\begin{figure}
\begin{center}
\includegraphics[scale=0.7,trim=35mm 155mm 25mm 25mm, clip]{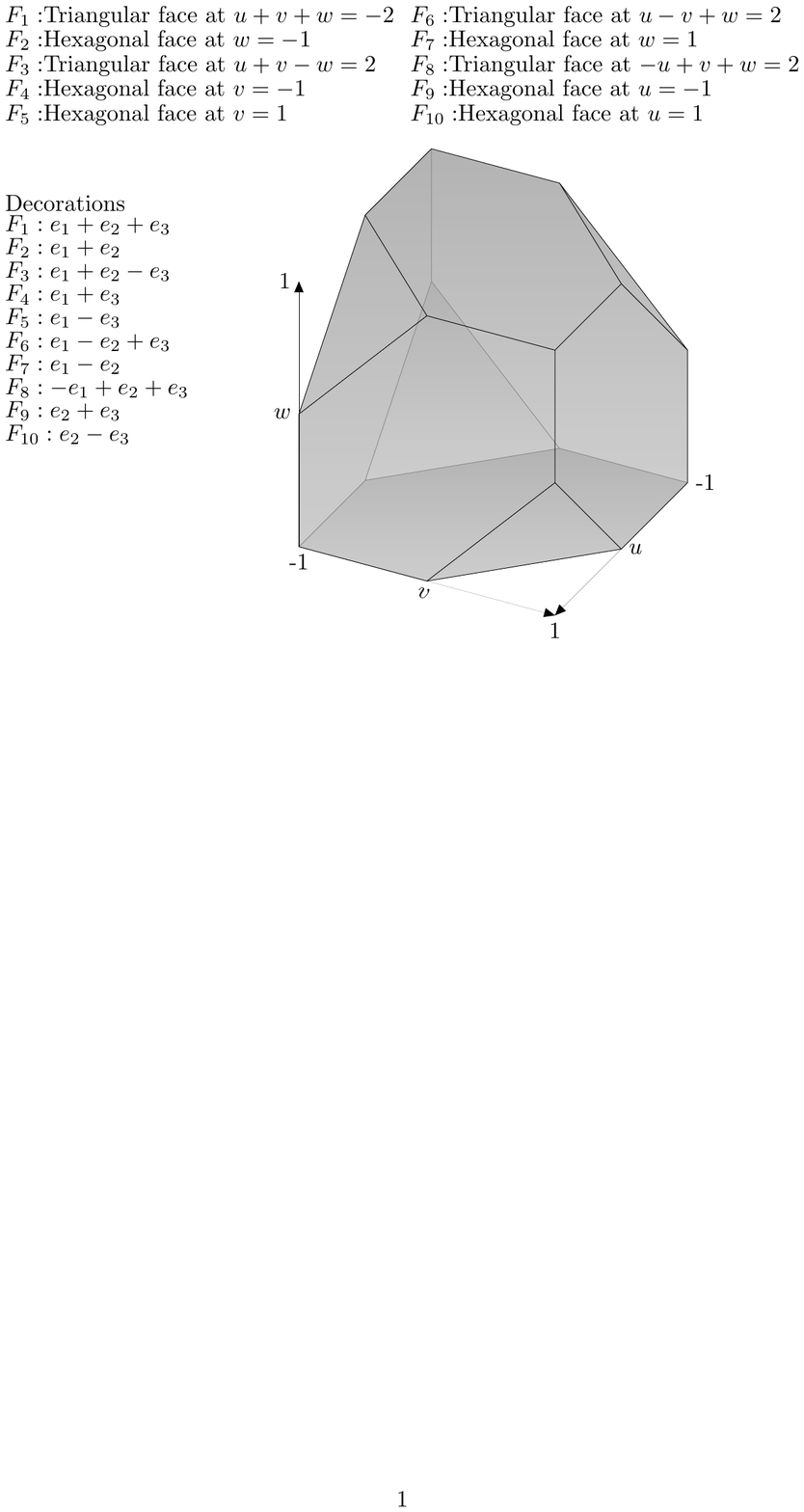} \caption{Top-dimensional cell in the Well-rounded retract for $GL_3$.}
\label{fig:SouleCubeMapleDecorated}
\end{center}
\end{figure}

Each quadratic form $Q$ in the interior of the cell in Figure \ref{fig:SouleCubeMapleDecorated} has a set of minimal vectors, $M(Q) = \{e_1, e_2, e_3\}$, where $\{e_i\}_{i=1}^{3}$ is the standard basis in $\mathbb{Z}^3.$  In the figure we have also named and decorated the 2-cells in the boundary of the cube.  In particular, the cell $F_i$ is decorated by the set $\{e_1, e_2, e_3, v_i\}$, where $F_i:v_i$ is listed to the left of the truncated cube. Therefore, for each $Q \in F_i$ we have $M(F_i):=M(Q) = \{e_1, e_2, e_3,v_i\}.$ Similarly as before, $\{e_1, e_2, e_3, v_i, v_j\}$ is the set of minimal vectors for quadratic forms in the interior of the 1-cell given as the intersection of $F_i$ and $F_j.$  We note that up to $\Gamma$-equivalence there is exactly one 3-cell, one 1-cell, and one 0-cell.  On the other hand, there are two $\Gamma$-equivalence classes of 2-cells, each visible in Figure \ref{fig:SouleCubeMapleDecorated} as a triangle, or a hexagon.

In light of Figure \ref{fig:SouleCubeMapleDecorated}, hereafter we will often refer to a generic top-dimensional cell in the well-rounded retract as a Soul{\'e} cube, a truncated cube, or just a cube.  Analogous to the rank one case, we call the cube decorated by $\{e_1, e_2, e_3\}$ the \emph{fundamental cube} in $W_3.$  Each such cube has ten 2-dimensional faces: six hexagons and four triangles.  Each hexagon is shared between three cubes, and each triangle between four cubes.  To make this more specific, consider the hexagon $F_{10}$ in Figure \ref{fig:SouleCubeMapleDecorated} determined by $u=1$.  As explained, the quadratic forms in the interior of $F_{10}$ are characterized by the set of minimal vectors, $$M(F_{10}) = \left\{e_1, e_2, e_3, \left(\begin{matrix}0 \\-1 \\ 1 \end{matrix}\right)\right\}.$$  Apart from the cube portrayed above, the two other cubes sharing this hexagonal face are decorated with the different rank three subsets of $M(F_{10})$,  namely,
$$\left\{e_1, e_2, \left(\begin{matrix}0 \\-1 \\ 1 \end{matrix}\right)\right\},\text{ and }\left\{e_1, e_3, \left(\begin{matrix}0 \\-1 \\ 1 \end{matrix}\right)\right\}.$$
Similarly, the four cubes tangent to the triangular face $F_3$ decorated with the vector $v = \left(\begin{matrix}1 \\ 1 \\-1\end{matrix}\right)$ and the vectors $e_1, e_2,$ and $e_3,$ are the cubes corresponding to the four different rank three subsets of $\{e_1, e_2, e_3, v\}.$ In summary, each cube is tangent to twenty four other cubes in codimension 1 faces.  Table \ref{Tab:SouleComplexIncidence} offers a summary of the incidence analysis, taking into account lower dimensional cells.   For $1\le i,j\le 5$ and $i> j$, $a_{ij}$ is the number of column type $j$ cells contained in the boundary of a cell of row type $i.$  On the other hand, $a_{ji}$ is the number of cells of column type $i$ containing a cell of row type $j$ in its boundary.
\begin{table}[h] 
\caption{Table of Incidences in $W_3$.}
\centering
\begin{tabular}{c|c|c|c|c|c}
\hline\hline
 & vertex &  edge & triangle & hexagon & Soul{\'e} cube \\
\hline
vertex&$-$&6&3&12&16\\
edge&2&$-$&1&4&8\\
triangle&3&3&$-$&$-$&4\\
hexagon&6&6&$-$&$-$&3\\
Soul{\'e} cube&16&24&4&6&$-$\\
\hline
\end{tabular}
\label{Tab:SouleComplexIncidence}
\end{table}

We note that the information in Table \ref{Tab:SouleComplexIncidence} departs from similar tables found in standard references, such as those in \cite{MR1463705}, and \cite[Appendix A]{MR2289048}, in the value in the top right corner.  We expand on how this value was obtained in the Appendix.

Finally, when studying the action of $\Gamma$ on the well-rounded retract, it is again very useful to think of $\Gamma$ as acting on the set of decorations.  Therefore, $\gamma \in \Gamma$ maps the cube decorated with $\{v_1, v_2, v_3\}$, to the one determined by the set $\{\gamma(v_1), \gamma(v_2), \gamma(v_3)\}.$  Note that $\gamma \in GL_3(\mathbb{Z})$ guarantees that this set has $\mathbb{R}$-rank equal to three.  This immediately points to the fact that each cube has a non-trivial stabilizer under the action of $\Gamma.$  In particular, the cube seen in Figure \ref{fig:SouleCubeMapleDecorated} is stabilized by all monomial elements in $\Gamma.$  In Figure \ref{fig:FundamentalDomainSouleCube}, we can see the fundamental domain for this action, triangulated as four neighbouring tetrahedra supported on the center of the cube.

\begin{figure}
\begin{center}
\includegraphics[scale=0.7,trim=45mm 175mm 45mm 25mm, clip]{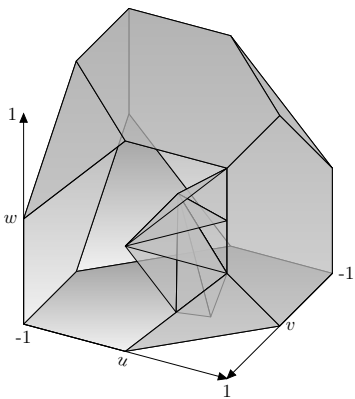} \caption{Fundamental domain for the action of $\Gamma$ within the Soul{\'e} cube.}
\label{fig:FundamentalDomainSouleCube}
\end{center}
\end{figure}

\subsection{Apartments and the well-rounded retract}\label{AptsAndWRR}
In this section we make the connection between cells in the
well-rounded retract and spherical apartments in the building associated to the boundary of the Borel-Serre stratification of $D_m.$
Recall, in the Borel-Serre stratification of $D_m$, for each
$\mathbb{Q}$-rational parabolic subgroup $P$ of $G$, one adjoins a
face $e(P)$ to the globally symmetric space.

We note that in rank one, each non-trivial rational parabolic
subgroup $P_{\{v\}}$ is the stabilizer of a flag $\{ 0 \subset
\mathbb{Q}\{v\} \subset \mathbb{Q}^2\}$, and a spherical apartment
can be represented by a pair of nodes, each node corresponding to a
(maximal) parabolic subgroup. On the other hand, as seen in Figure
\ref{fig:UpperHalfPlane}, each 1-cell in $W_2$ is decorated by a pair of
vectors $\{v_1, v_2\}.$  Therefore, to each 1-cell in the
well-rounded retract we can associate a unique spherical apartment,
namely $\{P_{\{v_1\}}, P_{\{v_2\}} \}.$  Going even further, one can show that if we were to extend the
well-rounded retraction $r$ to the Borel-Serre boundary, say via a
procedure making use of the tilling introduced in \cite{MR1470087}
as in \cite{MR1480546}, then points in $r\left(1,\left(e\left(P_{\{v_1\}}\right)\right)\right) \cap r\left(1,\left(e\left(P_{\{v_2\}}\right)\right)\right)$ describe precisely the $1$-cell in
$W_2$ decorated by $\{v_1, v_2\}.$

\begin{figure}[h!]
\begin{center}
\includegraphics[trim=50mm 185mm 50mm 25mm, clip]{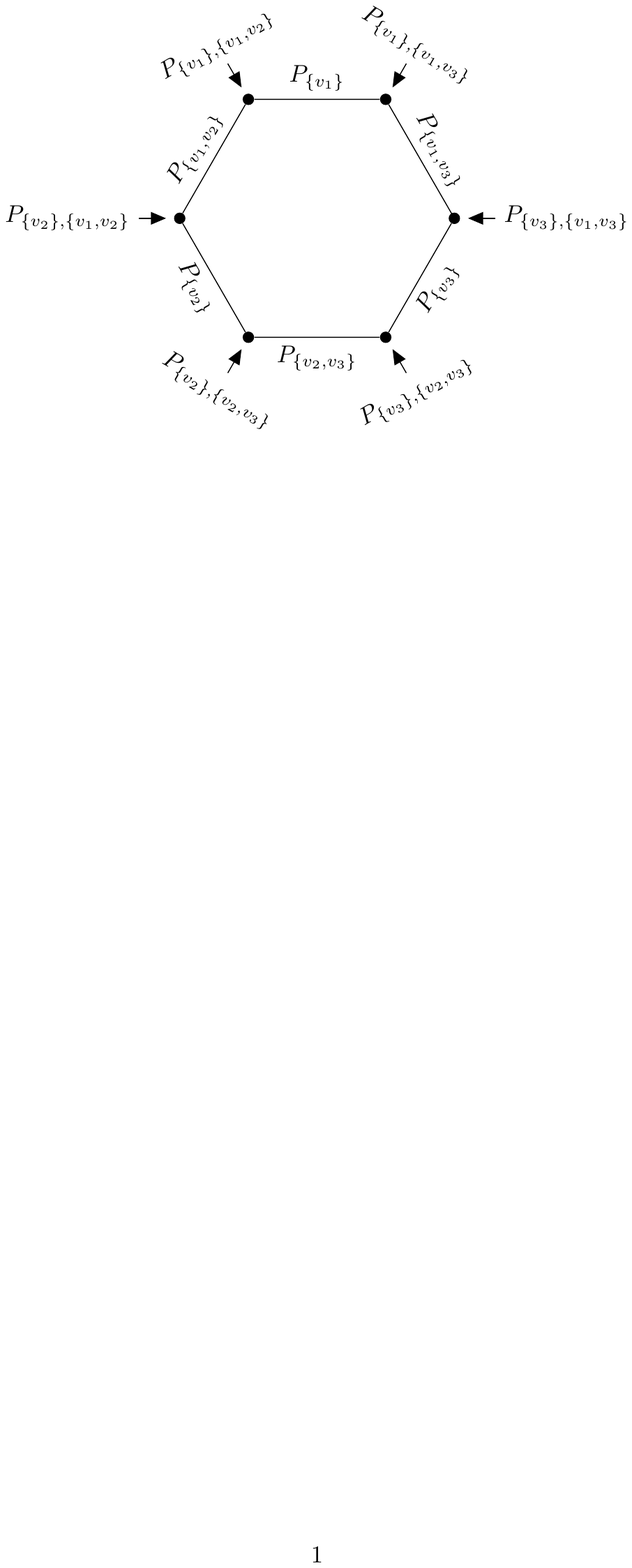} \caption{(Dual of a) rank two spherical apartment.}
\label{fig:RankTwoSpherical}
\end{center}
\vskip -0.5 in
\end{figure}

One can make a similar argument in the rank two case.  Here, there are two conjugacy classes of maximal parabolic subgroups, as well as one conjugacy class of minimal parabolics.  Recycling the notation from the previous paragraph, we use $P_U$ to denote the maximal parabolic subgroup stabilizing $U \subset \mathbb{Q}^3,$ and $P_{U_1, U_2}$ for the minimal parabolic subgroup stabilizing the flag $\{0 \subset U_1 \subset U_2 \subset \mathbb{Q}^3\}.$  For the $\mathbb{R}$-linearly independent set $\{v_1, v_2, v_3\}$, we can visualize a spherical apartment as the hexagon in Figure \ref{fig:RankTwoSpherical}, with the lines corresponding to maximal, and the points to minimal parabolic subgroups.  In fact, the spherical apartment is classically illustrated as the dimensional dual to Figure \ref{fig:RankTwoSpherical}, but for the purposes of this exposition we do not make this distinction.

As discussed before, a 3-cell $S$ in the well-rounded retract is decorated by a triple of vectors  $\{v_1, v_2,v_3\},$ making up a $\mathbb{Z}$-basis of $\mathbb{Z}^3$. To this cell we associate the spherical apartment in Figure \ref{fig:RankTwoSpherical}.  The relationship between the spherical building at infinity and the well-rounded retract goes beyond this identification.  Namely, consider a point $p \in S$ corresponding to the well-rounded marked lattice $f_{g}.$  Let $g = p_{ij}k,$ with $p_{ij} \in P_{\{v_i, v_j\}}.$  Note that $p_{ij}|_{\mathbb{R}\{v_i, v_j\}} \in \text{Aut}(\mathbb{R}\{v_i, v_j\}).$  It is not difficult to see that $p_{ij}|_{\mathbb{R}\{v_i, v_j\}}$ gives rise to a well-rounded marked lattice $f_{p_{ij}|_{\mathbb{R}\{v_i, v_h\}}}:\mathbb{Z}\{v_i, v_j\} \rightarrow \mathbb{R}\{v_i, v_j\},$ with $M(f_{p_{ij}|_{\mathbb{Q}\{v_i, v_j\}}}) = \{v_i, v_j\}\subset \mathbb{Z}\{v_i, v_j\}.$  We will refer to $p_{ij}|_{\mathbb{R}\{v_i, v_j\}}$ as the \emph{projection} of $g$ onto $\mathbb{R}\{v_i, v_j\}.$  The projection is unique up to $K \cap P_{\{v_i, v_j\}},$ an ambiguity that has no effect on the equivalence class of the marked lattice $f_{p_{ij}|_{\mathbb{Q}\{v_i, v_j\}}}$ modulo rotations.

The case when $\{v_1, v_2, v_3\} = \{e_1, e_2, e_3\}$, as seen in Figure \ref{fig:SouleCubeMapleDecorated}, is informative. The associated spherical apartment has six maximal parabolic subgroups, three of which stabilize a two-dimensional subspace: $P_{\{e_1, e_2\}},P_{\{e_1, e_3\}}$, and $P_{\{e_2, e_3\}}.$ For a point $p$ in the cube, we consider the three projections $p_{ij}|_{\mathbb{R}\{e_i, e_j\}}, 1\le i < j \le 3.$ A simple, albeit tedious, calculation shows that all three projections are in the corresponding fundamental arc for the relevant two dimensional subspace.  Furthermore, the $u$ coordinate of the point parametrizes the position of the projection to the fundamental arc in the $\mathbb{R}\{e_2, e_3\}$ subspace.  Similarly, the $v$ and $w$ coordinates parametrize the projections to the fundamental arcs in the $\mathbb{R}\{e_1, e_3\},$ and $\mathbb{R}\{e_1, e_2\}$ subspaces, respectively.

We can generalize this to a method for associating spherical apartments to top dimensional cells in the well-rounded retract in higher rank as well.

\section{Contracting the Well-Rounded-Retract}
We begin by saying a few words about why notions of distance from the well-rounded retract in the upper-half plane do not generalize conveniently to higher rank.
In particular, begin by choosing $S_0$, a top-dimensional cell to serve as an end-point for the contraction. We call a finite collection of top-dimensional cells $\{S_i\}_{i=0}^{k}$ in  $W_n$ an $r$-\emph{string} if $S_j$ and $S_{j+1}$ intersect in a co-dimension $r$ cell, and no cell is repeated in the sequence.  Furthermore we define the length of such a string to be $n$. Let $\Delta^n_r:W_n \rightarrow \mathbb{Z}$ be an integer-valued distance function defined as
$$\Delta^n_r(S) = \min\left\{\text{length}\left(\{S_i\}\right)\right\},$$
where the minimum ranges over all $r$-strings terminating with $S$.  The distance function $\Delta^2_1$ assigns what we can only call ''expected'' values to cells in the upper half-plane, and we observe that for each cell of distance $k$ in $W_2$ there is a unique string of length $k$ terminating at that cell.  The next step in constructing an explicit contraction of $W_2$ is to define subsets 
$$\Delta^2_1(k) =\{S \in W_2 \mid \Delta^2_1(S) \le k\} \subset W_2.$$
Finally, we can define a contraction to $S_0 = \Delta^2_1(0)$ recursively by specifying appropriate contractions, 
\begin{align}\label{eq:intermediate_contraction_map}
\left[\frac{1}{2^k}, \frac{1}{2^{k-1}}\right]\times \Delta^2_1(k) &\rightarrow \Delta^2_1(k)\\ \notag \left\{\frac{1}{2^{k-1}}\right\}\times \Delta^2_1(k) &\mapsto \Delta^2_1(k-1),
\end{align}
where $k \in \mathbb{Z}^{+}$.  Note, since two cells of distance $k$ in $W_2$ are incident in at most a subset of a cell of distance $k-1$, defining a map as in \eqref{eq:intermediate_contraction_map} reduces to contracting a generic $1$-cell in $W_2$ to one of its vertices. 

In $W_3$ on the other hand, there are already over 2400 cells of distance $3$ when using the most rigid of these distance functions $\Delta^3_1$, and the number quickly balloons to unmanageable heights when using $\Delta^3_r$ for $r \neq 1.$  Furthermore there is no hope for uniqueness of shortest strings.  An illustrative example is that of the cell decorated by the set
$$\left\{\left(\begin{matrix}0\\0\\1\end{matrix}\right), \left(\begin{matrix}1\\0\\-1\end{matrix}\right), \left(\begin{matrix}0\\1\\-2\end{matrix}\right)\right\}.$$
This cell in $\Delta^3_1(3) \setminus \Delta^3_1(2)$ is incident to the distance two stratum in five different 2-dimensional faces.  Complicating matters further, it is incident to other cells in $\Delta^3_1(3)$ in eight of its ten 2-dimensional faces.  Indeed, with this notion of distance, there is little hope in being able to define a contraction algorithm by simply specifying how to contract a generic top-dimensional cell to a subset of its boundary.

We tackle this challenge by re-defining distance in $W_2$ (see Definition \ref{W2:distance_def}).  We then define an analogous notion of distance in $W_3$, one that allows us to construct the contraction recursively, as in \eqref{eq:intermediate_contraction_map}.

\subsection{The Serre Tree} \label{Subsection:SerreTree}
Before we explain the contraction algorithm, we need to prove the following results.
\begin{lemma}\label{NormRankOne}
Let $g = \left( \begin{matrix} a & b \\ c & d \end{matrix}\right) = (v_1 \mid v_2) \in SL_2(\mathbb{\mathbb{Z}})$ give rise to a point on the well-rounded retract.  If $g$ neither stabilizes the fundamental arc in the well-rounded retract in $\Hh^+$, nor maps it to one of its neighboring arcs,
then,
\begin{enumerate}
 \item The sets $\{v_1, v_2, v_1+v_2\}, $ and $\{v_1, v_2, v_1-v_2\}$ are totally ordered with respect to the Euclidean norm;\label{test}
 \item $\min\{|v_1 \pm v_2|\} < \max\{|v_1|, |v_2|\}.$
\end{enumerate}
\end{lemma}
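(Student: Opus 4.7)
The plan is to reduce everything to the Gram identity $|u|^2|w|^2 - (u\cdot w)^2 = \det(u,w)^2$ combined with the arithmetic of short vectors in $\mathbb{Z}^2$. Since $g\in SL_2(\mathbb{Z})$, each of the pairs $(v_1,v_2)$, $(v_1,v_1\pm v_2)$, $(v_2,v_1\pm v_2)$ is a $\mathbb{Z}$-basis of $\mathbb{Z}^2$ with determinant $\pm 1$, so the Gram identity for such a pair reduces to $|u|^2|w|^2 - (u\cdot w)^2 = 1$. From this I would extract the following sub-lemma: \emph{if $(u,w)$ is a $\mathbb{Z}$-basis of $\mathbb{Z}^2$ with $|u|=|w|=\sqrt{N}$, then $N=1$ and $\{u,w\}\subseteq\{\pm e_1,\pm e_2\}$.} Indeed $u\cdot w\in\mathbb{Z}$ and $(u\cdot w)^2 = N^2-1$ must then be a perfect square, which forces $N=1$; the shortest nonzero vectors in $\mathbb{Z}^2$ are $\pm e_1,\pm e_2$, and we are done.

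Before invoking the sub-lemma, I would translate the hypothesis on $g$ into a condition on its columns. Since $\Gamma$ acts on decorations by its matrix action on vectors, the arc decorated $\{v_1,v_2\}$ is exactly the $g$-image of the fundamental arc. Hence $g$ stabilizes the fundamental arc iff $\{v_1,v_2\}\subseteq\{\pm e_1,\pm e_2\}$, and $g$ maps it to a neighbor iff one of the columns lies in $\{\pm e_1,\pm e_2\}$ while the other lies in $\{\pm e_1\pm e_2\}$.

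For part (1) I would argue by contrapositive: any norm coincidence among $\{|v_1|,|v_2|,|v_1\pm v_2|\}$ places $g$ in the excluded class. A coincidence $|v_1|=|v_2|$ applies the sub-lemma to $(v_1,v_2)$ directly, yielding the stabilizer case. A coincidence $|v_i|=|v_1\pm v_2|$ applies the sub-lemma to the basis $(v_i,v_1\pm v_2)$, placing both of these vectors in $\{\pm e_1,\pm e_2\}$; then writing the remaining column as the difference $\pm((v_1\pm v_2)-v_i)$ and noting that $v_i$ and $v_1\pm v_2$ cannot be parallel (else the pair would not be a basis) shows the remaining column lies in $\{\pm e_1\pm e_2\}$, so $g$ maps the arc to a neighbor. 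For part (2), suppose $\min\{|v_1+v_2|,|v_1-v_2|\}\geq \max\{|v_1|,|v_2|\}$; after swapping $v_1,v_2$ if needed to ensure $|v_1|\leq|v_2|$, this is exactly the Gauss-reduction condition on $(v_1,v_2)$. The only Gauss-reduced $\mathbb{Z}$-basis of $\mathbb{Z}^2$ (up to signs and swap) is $\{e_1,e_2\}$: the first element must be a shortest nonzero lattice vector, and the second a shortest vector not parallel to it. Hence $g$ stabilizes the fundamental arc, contradicting the hypothesis.

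The main obstacle is bookkeeping rather than mathematics: matching the five possible norm coincidences in (1) to the stabilizer/neighbor dichotomy, and confirming in each case that the pairs extracted from $\{v_1,v_2,v_1\pm v_2\}$ are genuine $\mathbb{Z}$-bases of $\mathbb{Z}^2$ so that the sub-lemma applies. The only genuinely arithmetic ingredient is the elementary fact that $N^2-1$ is a perfect square only for $N=1$.
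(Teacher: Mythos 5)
Your proof is correct, but it takes a genuinely different route from the paper's. The paper argues geometrically: it passes from the columns of $g$ to the rows of $g^{-1}$, normalizes by $KA_G$ so that the marked lattice is $e_1 \mapsto e_1$, $e_2 \mapsto x+iy$, and shows that any violation of (1) or (2) forces the point $x+iy$ into an explicit region of $\Hh^+$ (Figure \ref{fig:FundamentalRegionUpperHalfPlane}) whose intersection with $W_2$ is only the fundamental arc, whence $g$ stabilizes that arc or (after composing with $\left(\begin{smallmatrix}1 & 0\\ \pm1 & 1\end{smallmatrix}\right)$) maps it to a neighbor. You instead stay entirely in $\mathbb{Z}^2$: the identity $|u|^2|w|^2-(u\cdot w)^2=\det(u,w)^2=1$ for each of the unimodular pairs $(v_1,v_2)$, $(v_i,v_1\pm v_2)$, plus the observation that $N^2-1$ is a perfect square only for $N=1$, pins any norm coincidence down to $\{\pm e_1,\pm e_2\}$, and your translation of ``stabilizes/maps to a neighbor'' into conditions on the columns is legitimate because cells of $W_2$ are determined by their decorations and $\Gamma$ acts on decorations by the matrix action. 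What your version buys is independence from the half-plane picture and from the figure, and---more importantly---it is the rank-one prototype of the inner-product estimates $|(v_i,v_j)|\le\frac12\min\{|v_i|^2,|v_j|^2\}$ that the paper itself deploys in the proof of Theorem \ref{MainTheorem} for $GL_3$, so it generalizes in the direction the paper cares about; what the paper's version buys is the explicit link to the upper-half-plane geometry used to visualize $W_2$. One cosmetic suggestion: in part (2) you can avoid citing the classification of Gauss-reduced bases by noting that the reduction condition gives $(v_1\cdot v_2)^2\le\frac14|v_1|^2|v_2|^2$, which with the Gram identity yields $|v_1|^2|v_2|^2\le\frac43$ and hence $|v_1|=|v_2|=1$ directly.
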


\begin{proof}
Observe that if $w_1$, and $w_2$ are the rows of $g^{-1}= \left( \begin{matrix} d & -b \\ -c & a \end{matrix}\right)$, then $|v_1|,|v_2|,$ and $|v_1 \pm v_2|$ are equal to $|w_2|, |w_1|,$ and $|w_1 \pm w_2|,$ respectively.  Consequently, it suffices to prove the lemma assuming $v_1$ and $v_2$ are the rows of $g^{-1}.$  Furthermore, since acting by $k \in SO_2(\mathbb{R})$ on the right rotates the rows, it is sufficient to prove the lemma for the rows of $\left( \begin{matrix} y & x \\ 0 & 1 \end{matrix}\right) = g^{-1}k\left(\begin{matrix}\lambda & 0 \\ 0 & \lambda\end{matrix}\right)$.  Therefore, after modding out by rotation and homothety, we can assume that the marked lattice:

\begin{align*}f_{g^{-1}}:L_0 &\longrightarrow L_0\\
 v &\mapsto g(v)
\end{align*}
maps $e_1$ to $e_1$, and $e_2$ to a point $x + iy.$ 

Let us assume by contradiction that one (or both) of the conditions in the lemma are false.  We claim that this is equivalent to saying that $g$ gives rise to a point inside the region in the upper half-plane shown in Figure \ref{fig:FundamentalRegionUpperHalfPlane}.
First assume that condition (1) is violated by having $|v_1| = |v_2|.$  This equality translates to $x^2 + y^2 = 1.$ The only part of the unit circle intersecting the well-rounded retract is the fundamental arc, meaning that $f_{g^{-1}}$ corresponds to a point in the upper half-plane that is simultaneously in the $SL_2(\mathbb{Z})$-orbit of $z=i,$ and on the fundamental arc. Clearly this point is $z=i$ itself, and $g^{-1}$ (and therefore $g$) stabilizes the fundamental arc.  Now assume that condition (1) is violated, and without loss of generality let $|v_1 + v_2|$ be equal to $|v_i|, i= 1$ or $2.$  Since, $\tilde{g} = (v_1 \pm v_2 \mid v_i)$ is again an element of $SL_2^{\pm1}(\mathbb{Z})$ the above argument applies, and as a result we can deduce that $\tilde{g}$ stabilizes the fundamental arc.  Since $g = \tilde{g}\left(\begin{matrix}1 & 0 \\ \pm 1 & 1 \end{matrix}\right)$, we conclude that $g$ maps the fundamental arc to a neighboring arc.

On the other hand, if we assume that condition (2) is violated, we note that the norm inequality $|v_1 \pm v_2|> \max\{|v_1
\, |v_2|\}$ translates to the following condition on the basis $\{f_{g^{-1}}(e_1),f_{g^{-1}}(e_2)\}$ of the lattice $f_{g^{-1}}(L_0),$
$$(x\pm 1)^2 + y^2 > |v_1| = \max\{1, x^2 + y^2\}. $$
An immediate consequence is that $|x| < \frac{1}{2}.$  In addition, it follows from $(x\pm 1)^2 + y^2 > 1$ that $x + iy$ is a point above the the circular arcs in Figure \ref{fig:FundamentalRegionUpperHalfPlane}. 

\begin{figure}[h]
\begin{center}
\includegraphics[scale=0.4, trim=2mm 0mm 2mm 30mm, clip]{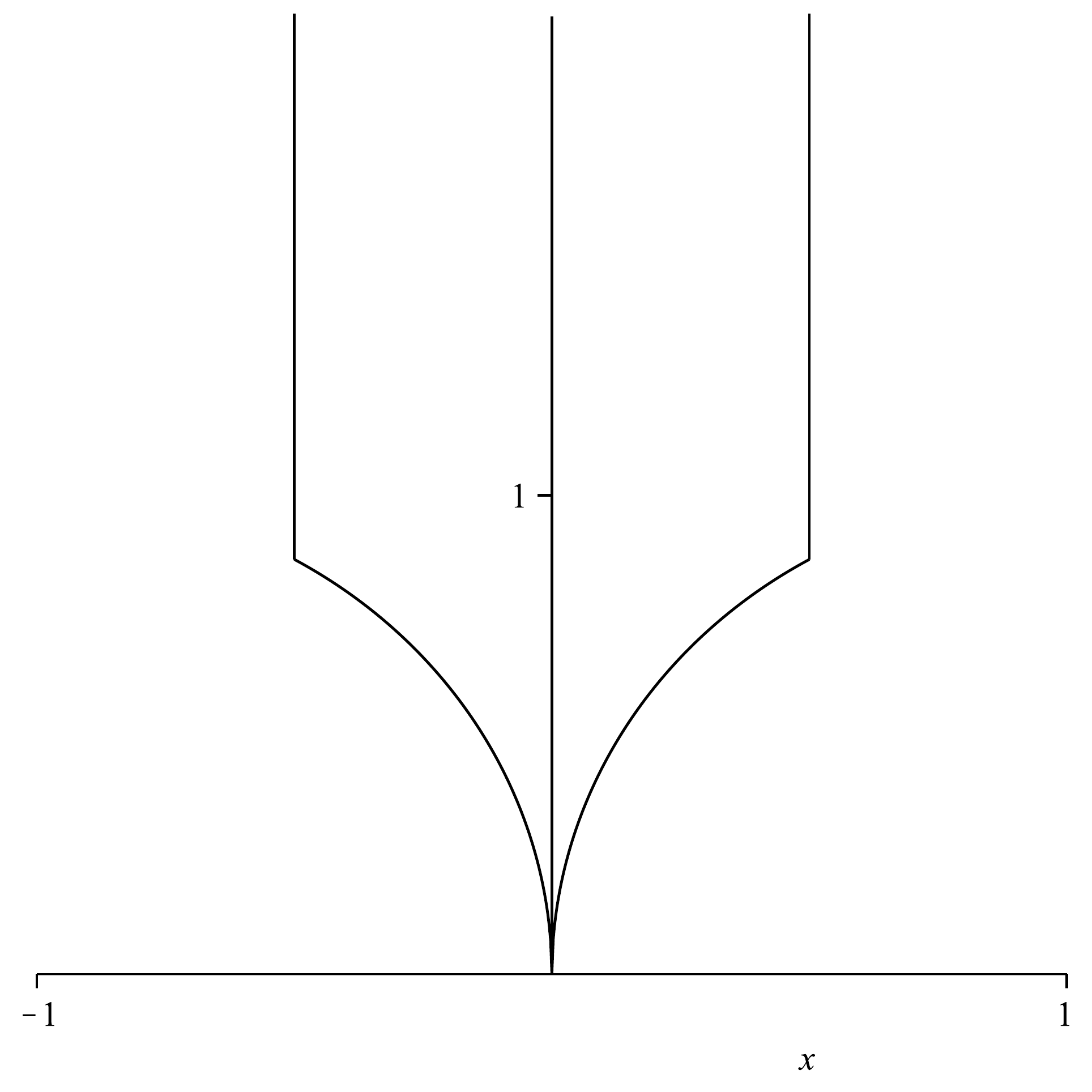}\caption{Region in the upper half-plane}
\label{fig:FundamentalRegionUpperHalfPlane}
\end{center}
\end{figure}

Therefore if any one of conditions (1) or (2) is violated, the marked lattice $f_{g^{-1}}$ corresponds to a point in the region outlined in Figure \ref{fig:FundamentalRegionUpperHalfPlane}.  Since the well-rounded retract intersects this region only in the fundamental arc, and taking into account that $g^{-1} \in SL_2(\mathbb{Z})$, it follows that $g^{-1}$ stabilizes the point $i$ when acting by fractional linear transformations, and therefore the fundamental arc.  This concludes the proof the lemma.

\end{proof}
We will also make use of the following geometric lemma,
\begin{lemma}\label{NormRankOne1}
 For $v_1, v_2 \in \mathbb{R}^m \setminus \{0\}$, if $|v_1 + v_2| \le \max\{|v_1|, |v_2|\},$ then
$$|v_1 - v_2| > \max\{|v_1|, |v_2|\}.$$
\end{lemma}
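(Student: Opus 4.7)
The plan is to reduce the statement to a one-line consequence of the parallelogram law in the inner product space $\mathbb{R}^m$. Without loss of generality I may assume $|v_1| \ge |v_2|$, so that $\max\{|v_1|,|v_2|\} = |v_1|$, and the hypothesis reads $|v_1 + v_2|^2 \le |v_1|^2$.

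The parallelogram identity gives
$$|v_1 + v_2|^2 + |v_1 - v_2|^2 \;=\; 2|v_1|^2 + 2|v_2|^2.$$
Subtracting the hypothesis from this identity yields
$$|v_1 - v_2|^2 \;\ge\; 2|v_1|^2 + 2|v_2|^2 - |v_1|^2 \;=\; |v_1|^2 + 2|v_2|^2.$$
Since $v_2 \neq 0$ by assumption, the term $2|v_2|^2$ is strictly positive, so $|v_1 - v_2|^2 > |v_1|^2 = \max\{|v_1|,|v_2|\}^2$, which is the desired strict inequality.

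There is essentially no obstacle here: the only care needed is to ensure the inequality is strict, which is precisely where the hypothesis $v_1,v_2 \in \mathbb{R}^m \setminus \{0\}$ enters. One could alternatively argue by expanding $|v_1 \pm v_2|^2 = |v_1|^2 \pm 2\langle v_1, v_2\rangle + |v_2|^2$ directly: the hypothesis then forces $2\langle v_1, v_2\rangle \le -|v_2|^2 < 0$, from which $|v_1 - v_2|^2 \ge |v_1|^2 + 2|v_2|^2 > |v_1|^2$ follows by the same counting. Both approaches are essentially identical, and the parallelogram form is the more conceptual of the two.
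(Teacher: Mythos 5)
Your proof is correct and is essentially the paper's argument in a slightly repackaged form: the paper expands $|v_1 \pm v_2|^2$ directly and restates the hypothesis and conclusion as $\frac{(v_1,v_2)}{|v_2|^2} \le -\frac{1}{2}$ and $\frac{(v_1,v_2)}{|v_2|^2} < \frac{1}{2}$ respectively, which is exactly the alternative you sketch at the end. Your parallelogram-law packaging and your handling of strictness via $v_2 \neq 0$ are both fine.
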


\begin{proof}
 Without loss of generality, assume $|v_1| = \max\{|v_1|, |v_2|\}.$  Then the condition $|v_1 + v_2| \le \max\{|v_1|, |v_2|\}$ is equivalent to $\frac{(v_1, v_2)}{|v_2|^2} \le -\frac{1}{2}.$  On the other hand, the conclusion $|v_1 - v_2| > \max\{|v_1|, |v_2|\}$ is equivalent to $\frac{(v_1, v_2)}{|v_2|^2} < \frac{1}{2}.$  The proof of the lemma is immediate.
\end{proof}

We impose a preorder on the set of 1-cells in $W_2.$  More specifically, when comparing two 1-cells decorated by $\{v_1, v_2\}$ and $\{w_1, w_2\}$, respectively, we say that $\{v_1, v_2\} \prec \{w_1, w_2\}$ whenever 
$$\min\{|v_1|,|v_2|\} < \min\{|w_1|,|w_2|\}, \text{  or}$$
$$\min\{|v_1|,|v_2|\} = \min\{|w_1|,|w_2|\}, \max\{|v_1|,|v_2|\} < \max\{|w_1|,|w_2|\}.$$

Consider an arbitrary 0-cell $o \in W_2,$ one that is not incident to the fundamental arc.  Since all vertices in $W_2$ are $SL_2(\mathbb{Z})$-equivalent to the one decorated by $\{e_1, e_2, e_1+e_2\},$ we can assume without loss of generality that $o$ is decorated by $\{v_1, v_2, v_1+v_2\}$ for $(v_1 \mid v_2) \in SL_2(\mathbb{Z}).$  As a consequence of Lemma \ref{NormRankOne}, we see that the three 1-cells incident at $o$ are totally ordered with respect to $\prec.$

\begin{definition}
 For an arbitrary point $p$ in the well-rounded retract, the \emph{minimal arc for p} is the smallest $1$-cell in its star.
\end{definition}

Next we consider an arbitrary, non-fundamental 1-cell $A$, decorated by $\{v_1, v_2\}$, where $(v_1 \mid v_2) \in SL_2(\mathbb{Z}).$  Consequently, the two 0-cells that make up the boundary of $A$ are decorated by $\{v_1, v_2, v_1+v_2\},$ and $\{v_1, v_2, v_1-v_2\}.$   It follows from Lemmas \ref{NormRankOne} and \ref{NormRankOne1} that exactly one of these 0-cells is incident to a 1-cell that is smaller than $A$ with respect to $\prec.$

\begin{definition}
 For a $1$-cell $A$, the \emph{minimal set for A}, $\Xi(A)$, is the set consisting of the single vertex incident to $A$ such that its minimal arc is different than $A$.  Cells in $\Xi(A)$ are called \emph{minimal cells for S.}
\end{definition}

\begin{definition} \label{W2:distance_def}
 To each point $p \in W_2$ we assign an integer $d(p)$ called the distance from $p$ to the fundamental arc, defined as 
$$d(p):=\min_A\{D(A)\},$$
where the minimum is taken over all $1$-cells $A \in W_2$ that are incident to $p$.  This definition depends on the definition of $D(A)$, similarly called the distance from $A$ to the fundamental arc, defined as
$$D(A) := \begin{cases}0 & A\text{ is the fundamental arc}, \\ \max_{p\in \Xi(A)}\{d(p)\} +1 & \text{otherwise.}\end{cases}$$
One can check that $d$ and $D$ are well-defined, and that for $p \in A \setminus \Xi(A)$, $d(p) = D(A),$ as expected.
\end{definition}
\begin{definition} The subset $W_2(n) \subset W_2, n = 0,1,2,3,\dots$ is defined as 
$$W_2(n):=\{p \in W_2 \mid d(p) \le n\}.$$
For notational convenience we define the set $W_2(-1)$ to be the point $i \in \Hh^+.$
\end{definition}
With this information in mind, we are ready to define the method of contraction. 
\begin{algorithm}\label{Algorithm:W2} Algorithm for assigning trajectories to points in $W_2$:
\begin{enumerate}[leftmargin=0.2 in]
\item An arbitrary, non-fundamental 1-cell $A$, decorated by $\{v_1,v_2\}$ can be oriented according to a direction pointing towards the fundamental arc in the well rounded retract. More specifically, we choose the direction to point towards the vertex in $\Xi(A).$ \label{rule-one} 
\item An arbitrary 0-cell should follow a trajectory in the direction specified by its minimal arc. \label{rule-two}
\item To each point $p$ in the fundamental arc, we can assign a unique trajectory terminating at $z = i.$
\end{enumerate}
\end{algorithm}

In following with steps \ref{rule-one} and \ref{rule-two} above, to each point $p\in W_2(n) \subset W_2$, we associate a unique trajectory terminating at $W_2(n-1)$.  Using Euclidean length in the upper half-plane, we can parametrize these trajectories with constant speed by, $$\phi_p: [0, 1] \rightarrow W_2, \,\,\,\, \phi_p(0) = p \in W_2(n),\,\,\,\,\, \phi_p(1) \in W_2(n-1),\,\,\,n=0,1,2,3,\dots$$

\begin{definition}\label{Definition:h2}
The contraction of the well-rounded retract in rank one is defined recursively,
\begin{align*}
 h_{2}:[0, 1] \times W_2 &\longrightarrow W_2\\
\left[\frac{1}{2^{n+1}}, \frac{1}{2^{n}}\right]\times W_2(n) &\longmapsto  W_2(n), n=0,1,2,\dots\\
(t, p) &\longmapsto \phi_p\left(2^{n+1}t-1\right).
\end{align*}
\end{definition}

\subsection{The Soul{\'e} Complex}

In this section we generalize the algorithm we employed for the Serre tree to the higher dimensional Soul{\'e} complex.  Before we introduce the specifics of the contraction, we prove a collection of results pertaining to the combinatorial nature of the decorations of $W_3$.

Hereafter, $S$ will denote an arbitrary 3-cell decorated by the triplet $\{v_1, v_2, v_3\},$ with $g = (v_1 \mid v_2 \mid v_3) \in SL_3(\mathbb{Z}).$  At times, we will use Figure \ref{fig:SouleCubeMapleDecorated} to visualize $S$.  We will abuse notation and refer to the coordinate system as $(u, v, w)$, with the caveat that the coordinates in reality are $g$-translates of the $(u, v, w)$-system used to describe the fundamental cube decorated by $\{e_1, e_2, e_3\}.$  Similarly the face $F_{i}:v_i$ as seen in Figure \ref{fig:SouleCubeMapleDecorated}, when used in reference to $S$, will denote the hexagonal face decorated by $g\{e_1, e_2, e_3, v_i\}.$

We begin by introducing a pre-ordering on vectors in $\mathbb{Z}^3,$ where the vectors are ordered first by Euclidean norm, and then lexicographically by the size of each entry.  More specifically,
$$v = \left(\begin{matrix}v_1\\v_2\\v_3\end{matrix}\right) \prec w= \left(\begin{matrix}w_1\\w_2\\w_3\end{matrix}\right),$$
whenever, $$|v| < |w|, \text{ or } $$ $$\left\{\left(|v| = |w|\right) \wedge \left(\exists\,\, 1\le i\le 3 \mid |v_i| < |w_i|, |v_j| = |w_j|\,\,\forall \,\,1\le j < i \right)\right\}.$$
We say that $v \approx w$, whenever $|v_i| = |w_i|, 1 \le i \le 3.$  Finally, we say that $v \preceq w$ whenever $v \prec w$ or $v \approx w.$
This in turn induces a pre-ordering on finite collections of vectors. In determining which collection $\{v_1, \dots, v_k\} \neq \{w_1, \dots, w_k\}$ is smaller with respect to the induced ordering $\preceq$, we choose $v_M \in \{v_1, \dots, v_k\} $ and $w_M \in \{w_1, \dots, w_k\}$ such that $ v_M \preceq v_i$ and  $w_M \preceq w_j,$ for $1\le i,j\le k.$  If $v_M \prec w_M$ we say that $\{v_1, \dots, v_k\} \prec \{w_1, \dots, w_k\},$ and similarly, if $w_M \prec v_M$ we say that $\{w_1, \dots, w_k\} \prec \{v_1, \dots, v_k\}.$  If $v \approx w$, then we proceed by comparing the sets $\{v_1, \dots, v_k\} \setminus \{v_M\}$ and $\{w_1, \dots, w_k\} \setminus \{w_M\}.$  If this process terminates without being able to conclude which collection is smaller, we say that $\{v_1, \dots, v_k\} \approx \{w_1, \dots, w_k\}.$

In rank one there is a unique $\mathbb{Z}$-basis $\{v_1, v_2\}$ of $\mathbb{Z}^2$ such that, $|v_1 \pm v_2| > \max\{|v_1|, |v_2|\},$ namely the one associated with the fundamental arc in the upper half-plane.  Analogously, in rank two we make the following definition:
\begin{definition}
 For a pair of primitive vectors $\{v_1, v_2\}$ in $\mathbb{Z}^3$, we say that $\{v_1, v_2\}$ forms a \emph{fundamental pair} whenever $v_1 \pm v_2 \succ v_1, v_2.$
\end{definition}
\begin{lemma}\label{NormRankTwo}
 If a pair $\{v_1, v_2\}$ of primitive vectors in $\mathbb{Z}^3$ is not fundamental, then
$$\max \{ |v_1 \pm v_2| \} > \max \{ |v_1|, |v_2|\} \ge \min \{ |v_1 \pm v_2| \}.$$
\end{lemma}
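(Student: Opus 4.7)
The plan is to reduce the statement to a purely metric claim and then apply Lemma \ref{NormRankOne1}. The key observation is that although the preorder $\preceq$ refines the Euclidean norm ordering via a coordinate-wise lexicographic tiebreaker, a failure of $v \succ w$ still forces $|v|\le|w|$; the tiebreaker cannot override the leading norm comparison.

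First I would unpack the non-fundamental hypothesis. By definition, $\{v_1,v_2\}$ is fundamental iff the four comparisons $v_1+v_2 \succ v_i$ and $v_1-v_2 \succ v_i$, $i=1,2$, all hold. If the pair is not fundamental, then at least one of these fails, and after swapping $v_1 \leftrightarrow v_2$ and/or replacing $v_2$ by $-v_2$ — operations which preserve both the hypothesis and the desired conclusion, since everything in sight is symmetric in the two vectors and in the signs — I may assume that $v_1+v_2 \not\succ v_1$. Unwinding the definition of $\prec$ then yields $|v_1+v_2| \le |v_1| \le \max\{|v_1|,|v_2|\}$, which already establishes the right-hand inequality $\min\{|v_1\pm v_2|\} \le \max\{|v_1|,|v_2|\}$ in the conclusion.

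For the left-hand inequality I would apply Lemma \ref{NormRankOne1} to the pair $v_1,v_2$ (both nonzero, since primitive); combined with the bound $|v_1+v_2| \le \max\{|v_1|,|v_2|\}$ just established, the lemma delivers $|v_1-v_2| > \max\{|v_1|,|v_2|\}$, and therefore $\max\{|v_1\pm v_2|\} > \max\{|v_1|,|v_2|\}$. Chaining the two inequalities produces the desired chain. I do not expect a serious technical obstacle here: the only real subtlety is the bookkeeping around the lexicographic tiebreaker built into $\preceq$, and, as noted at the outset, this tiebreaker cannot disturb the norm-level conclusion. The proof is essentially a repackaging of the rank-one norm lemma into the preorder language used in the rank-two setting.
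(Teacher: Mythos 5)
Your proposal is correct and takes essentially the same route as the paper: the paper's proof of Lemma \ref{NormRankTwo} is a one-line remark that it is a ``small adjustment'' of Lemma \ref{NormRankOne1}, and your write-up supplies exactly that adjustment --- use the symmetry normalizations to reduce the failure of fundamentality to $|v_1+v_2|\le\max\{|v_1|,|v_2|\}$ (the lexicographic tiebreaker indeed cannot overturn the leading norm comparison), then apply Lemma \ref{NormRankOne1} to get the strict inequality for $|v_1-v_2|$.
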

\begin{proof}
 This proof is a small adjustment to the one in Lemma \ref{NormRankOne1}.
\end{proof}

\begin{Theorem}\label{MainTheorem}
 Consider a $\mathbb{Z}$-basis of $\mathbb{Z}^3,$ $\{v_1, v_2, v_3\}.$  If all pairs of vectors $\{v_i, v_j\},\, 1\le i \neq j, \le 3$ are fundamental pairs, then $\{v_1, v_2, v_3\} \subset \{\pm e_1, \pm e_2,\pm e_3\}.$
\end{Theorem}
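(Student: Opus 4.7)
The plan is to translate the combinatorial fundamental-pair hypothesis into numerical bounds on the pairwise inner products $v_i \cdot v_j$, and then exploit the fact that $\det(v_1\mid v_2\mid v_3) = \pm 1$ through the Gram determinant identity. Reorder the vectors so that $n_1 \le n_2 \le n_3$, where $n_i := |v_i|^2$, and set $a = v_1\cdot v_2$, $b = v_1\cdot v_3$, $c = v_2\cdot v_3$. The fundamental-pair condition on $\{v_i, v_j\}$ gives $|v_i\pm v_j|^2 \ge \max(n_i, n_j)$, which upon expansion yields $|v_i\cdot v_j| \le \min(n_i, n_j)/2$, i.e.\ $|a|, |b| \le n_1/2$ and $|c| \le n_2/2$. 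The Gram identity for a $\mathbb Z$-basis of $\mathbb Z^3$ reads
\[
n_1 n_2 n_3 - n_1 c^2 - n_2 b^2 - n_3 a^2 + 2abc \;=\; 1.
\]

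The heart of the argument is to show that all three $n_i$ equal $1$. Plugging the inner-product bounds into the Gram identity and rearranging, I would obtain $(3/4)\, n_1 n_2 (n_2 - n_1) \le 1$, and integrality immediately forces $n_1 = n_2$. Solving the same bound for $n_3$ with $n_1 = n_2 =: n$ gives $n_3 \le n + 4/(3n^2)$, which for $n \ge 2$ forces $n_3 = n$; so all three norms coincide.

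The delicate step, which I expect to be the main obstacle, is ruling out $n \ge 2$. With $n_1 = n_2 = n_3 = n$, the Gram identity reduces to
\[
n(n^2 - a^2 - b^2 - c^2) + 2abc \;=\; 1, \qquad |a|, |b|, |c| \le n/2.
\]
For $n$ even, both terms on the left are even, an immediate contradiction. For $n \ge 3$ odd, integrality sharpens to $|a|, |b|, |c| \le (n-1)/2$; the left-hand side is minimized by jointly maximizing $a^2+b^2+c^2$ (which forces $|a|=|b|=|c|=(n-1)/2$) and then choosing signs so that $abc$ is negative, and a direct calculation yields the lower bound $(3n-1)^2/4 \ge 16$, again incompatible with the value $1$. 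Hence $n=1$, i.e.\ $n_1 = 1$.

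Once $n_1 = 1$, the bounds collapse to $a = b = 0$, and the Gram identity becomes $n_2 n_3 - c^2 = 1$ with $|c|\le n_2/2$. Combined with $n_3 \ge n_2 \ge 1$ and $c^2 \le n_2^2/4$, this forces $n_2 = n_3 = 1$ and $c = 0$. Therefore $v_1, v_2, v_3$ are pairwise orthogonal unit integer vectors in $\mathbb Z^3$, and necessarily form a signed permutation of $\{e_1, e_2, e_3\}$, proving the inclusion $\{v_1, v_2, v_3\} \subset \{\pm e_1, \pm e_2, \pm e_3\}$.
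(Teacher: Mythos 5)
Your proof is correct, and it takes a genuinely different route from the paper's. Both arguments start from the same numerical translation of the fundamental-pair hypothesis, namely $|(v_i,v_j)| \le \tfrac{1}{2}\min\{|v_i|^2,|v_j|^2\}$, but they exploit unimodularity of the basis differently. The paper writes $e_i = \sum_j a_{ij}v_j$, expands $\|e_i\|^2 = 1$, bounds the cross terms to obtain a sum of non-negative half-integers, and then runs a mod-$2$ argument on $\det(a_{ij}) = \pm 1$ to force all three norms to be equal, after which a single surviving term of size $\tfrac{1}{2}$ pins the common norm to $1$. You instead feed the same inner-product bounds into the Gram determinant identity $\det\left((v_i,v_j)\right) = 1$ and finish by a chain of integer inequalities. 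I verified the two steps you flag as delicate: the rearrangement does yield $\tfrac{3}{4}n_1n_2(n_2-n_1) \le 1$ (the discarded slack is $(n_3-n_2)\cdot\tfrac{n_1}{4}(4n_2-n_1) \ge 0$), and the minimum of $n(n^2-a^2-b^2-c^2)+2abc$ over the box $|a|,|b|,|c|\le (n-1)/2$ is indeed $(3n-1)^2/4$, attained at a vertex with $abc<0$ since the function is concave in each variable. Your version is more self-contained: it never needs the inverse change-of-basis matrix and it sidesteps the paper's somewhat delicate case analysis of which half-integer terms vanish (where the possibility $a_{1k}=a_{1l}=0$ is glossed over). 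The trade-off is that your argument is tied to the explicit $3\times 3$ Gram determinant, whereas the paper's expansion of $\|e_i\|^2$ together with the determinant parity trick is the piece more likely to generalize to higher rank, which is the paper's stated goal.
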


\begin{proof}

From the hypothesis, it follows that:
\begin{equation}\label{eq:NormInEq}
|v_i \pm v_j| \ge \{|v_i|, |v_j|\}.
\end{equation}
We note that an equivalent system of inequalities is
$$|\text{proj}_{v_i}(v_j)| \le \frac{1}{2}|v_i|.$$
Making use of the fact that $|\text{proj}_{v_i}(v_j)| = \frac{|(v_i, v_j)|}{|v_i|} ,$ we can restate the above conditions in a unified form,
\begin{equation}\label{eq:NormInEqRestated}
|(v_i, v_j)| \le \frac{1}{2}\min\{|v_i|^2, |v_j|^2\}.
\end{equation}

We proceed by contradiction, and assume $\{v_1, v_2, v_3\} \neq \{\pm e_1, \pm e_2, \pm e_3\}$.  Since $\{v_1, v_3, v_3\}$ is an integral basis for $L_0$, there exist integers $\{a_{ij}\}, 1\le i,j \le 3$ such that $e_i = \sum_{j}a_{ij}v_j.$  Observe that the matrix $A=\left(a_{ij}\right)$ relates one integral basis to another, and consequently has to be an element of $GL_3(\mathbb{Z}).$
Note,
\begin{align} \label{equation:new_main_proof_0}
\notag 1 &= \|e_1\|^2 = \left(\sum_{j}a_{1j}v_j,\sum_{j}a_{1j}v_j\right)\\\notag &= \sum_{j}a_{1j}^2\|v_j\|^2 + \sum_{1\le k<l\le3}2a_{1k}a_{1l}(v_k,v_l) \\\notag &\ge \sum_{j}a_{1j}^2\|v_j\|^2 - \sum_{1\le k<l\le3}|a_{1k}||a_{1l}|\min\left\{|v_k|^2, |v_l|^2\right\} \\ &\ge \sum_{1\le k<l\le3} \left[\frac{1}{2}\left(a_{1k}^2\|v_k\|^2 + a_{1l}^2\|v_l\|^2\right) - |a_{1k}||a_{1l}|\min\left\{|v_k|^2, |v_l|^2\right\} \right]. 
\end{align}
Each of the three terms on the right hand side is equal to $\frac{m}{2}, m \in \mathbb{Z}_{\ge 0}$, and zero only if 
\begin{equation} \label{equation:new_main_proof}
 \|v_k\| = \|v_l\|, |a_{1k}|=|a_{1l}|.
\end{equation}
Therefore exactly one of the terms is equal to zero, and the other two equal to one half.  Repeating the above calculation for $\|e_2\|^2$ and $\|e_3\|^2$, in each step we find a pair of vectors for which \eqref{equation:new_main_proof} holds true.  
  
If these pairs are always the same, say $\{v_p,v_q\}$ for some $1\le p < q \le 3$, then $a_{ip} = \pm |a_{iq}|$ for $1\le i\le 3.$  It follows that $2 \mid \det(A)$, which is a contradiction.  Consequently, $$\|v_1\| = \|v_2\| = \|v_3\| = \|v\|.$$  Getting back to a non-zero term on the right hand side of \eqref{equation:new_main_proof_0}, we find that:
\begin{align*}
 \|v\|^2\left(a_{1k}^2 + a_{1l}^2 - 2|a_{1k}||a_{1l}|\right) = 1.
\end{align*}
Therefore, $\|v\| = 1$.  This concludes the proof by contradiction.
\end{proof}

\begin{lemma}
At an arbitrary $0$-cell, the 6 integral vectors that make up its decoration are well ordered with respect to $\prec. $
\end{lemma}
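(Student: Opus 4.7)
The plan is to reduce the statement to a distinctness check in $\mathbb{F}_2^3$. By the observation in Section \ref{Subsection:GL3} that $\Gamma = GL_3(\mathbb{Z})$ acts transitively on $0$-cells of $W_3$, it suffices to pin down a single $0$-cell, compute its decoration explicitly, and show that under any $\gamma \in \Gamma$ the translated decoration contains no two $\approx$-equivalent vectors; by inspection of the definition of $\prec$, this is exactly what ``well-ordered with respect to $\prec$'' requires on a finite set.

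First I would identify a canonical decoration. A direct $Q$-norm calculation at the point $(u,v,w)=(1,1,1)$ inside the fundamental Soul\'e cube, where the associated quadratic form has matrix $\left(\begin{smallmatrix} 2 & 1 & 1 \\ 1 & 2 & 1 \\ 1 & 1 & 2 \end{smallmatrix}\right)$, shows that its set of minimal vectors, taken up to $\pm$, is
$$D_0 \;=\; \{e_1,\, e_2,\, e_3,\, e_1-e_2,\, e_1-e_3,\, e_2-e_3\}.$$
Thus every $0$-cell decoration has the form $\gamma D_0$ for some $\gamma \in \Gamma$.

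The key observation is that $\approx$ collapses under reduction modulo $2$. Indeed, if $\gamma u \approx \gamma w$ for two distinct classes $u,w$ of $D_0$, then $\gamma w = D\,\gamma u$ for some diagonal $D \in \{\pm 1\}^3$, and since $\pm 1 \equiv 1 \pmod 2$ this forces $\gamma u \equiv \gamma w \pmod 2$. Because $\det \gamma = \pm 1$ is odd, the reduction $\bar\gamma$ lies in $GL_3(\mathbb{F}_2)$ and hence is invertible, so we conclude $u \equiv w \pmod 2$. But the six representatives of $D_0$ reduce modulo $2$ to $\{e_1,\, e_2,\, e_3,\, e_1+e_2,\, e_1+e_3,\, e_2+e_3\}$, which comprises $6$ of the $7$ nonzero elements of $\mathbb{F}_2^3$ (the missing one being $e_1+e_2+e_3$) and is in particular pairwise distinct. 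Since negation acts trivially modulo $2$, distinctness persists after quotienting by $\pm$, forcing $u = \pm w$ and contradicting the assumption.

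The only real subtlety is the initial identification of $D_0$: one needs to verify, by spelling out the $Q$-norm of candidates such as $e_i+e_j$ and $e_1+e_2-e_3$ at the point $(1,1,1)$, that no integral vector has strictly smaller $Q$-norm than the six listed. This is a routine, if slightly tedious, finite check. Once $D_0$ is in hand, the mod-$2$ argument is quite clean, and it notably avoids the determinant case analysis one would face if one instead tried to rule out each coordinate-wise sign relation $\gamma w = D\,\gamma u$ directly by showing that $\det \gamma$ is forced to be even.
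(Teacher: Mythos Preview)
Your argument is correct, and while it shares the same mod-$2$ spirit as the paper's proof, the execution is genuinely different. The paper writes the decoration as $\Sigma = \{v_1, v_2, v_3, v_1+v_2, v_1+v_3, v_1+v_2+v_3\}$ for some $\mathbb{Z}$-basis $\{v_i\}$, asserts that every pair $w_1, w_2 \in \Sigma$ admits a third vector $w_3 \in \Sigma$ of the form $\pm w_1 \pm w_2$, and then observes that if $w_1 \approx w_2$ each entry of $w_3$ is $0$ or $\pm 2$ times the corresponding entry of $w_1$, so $w_3$ is divisible by $2$ and hence not primitive. Your proof bypasses this pair-by-pair search: you note that $\approx$ forces congruence mod $2$, that $\bar\gamma \in GL_3(\mathbb{F}_2)$ is invertible, and that the six representatives of $D_0$ are already pairwise distinct in $\mathbb{F}_2^3$. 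This is arguably cleaner---the paper's claim that a suitable $w_3$ always lies in $\Sigma$ is not entirely obvious for every pair (consider $\{v_2,v_3\}$), whereas your reduction sidesteps that issue. The only cost is pinning down $D_0$ explicitly; you could equally well avoid that step by observing that the paper's $\Sigma$ also reduces to six distinct nonzero elements of $\mathbb{F}_2^3$, missing only $\bar v_2 + \bar v_3$.
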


\begin{proof}
 Since all 0-cells in the well rounded-retract are equivalent modulo $SL_3(\mathbb{Z}),$  we may assume that an arbitrary $0$-cell is decorated by the collection of vectors $\Sigma = \{v_1, v_2, v_3, v_1+v_2. v_1+v_3, v_1+v_2+v_3\},$ where $\{v_1, v_2, v_3\}$ is a $\mathbb{Z}$-basis of $\mathbb{Z}^3,$ and all 6 vectors are primitive.  It is evident that for any pair of these vectors $\{w_1, w_2\}\subset \Sigma$, there exists a third vector $w_3$ within this sextet such that $w_1 + w_3 = \pm w_2,$ or $w_1 - w_3 = \pm w_2.$ Now assume $w_1 \approx w_2,$ and let $w_3$ be as above.  From the properties of the ordering, it is evident that the coefficients of $w_3$ are either $0$, or twice the corresponding coefficient in $w_1$ in absolute value.  However, this immediately tells us that $w_3$ is not a primitive integral vector, since 2 divides all of its entries.  This is a contradiction and concludes the argument.
\end{proof}

\begin{corollary}
All cubes (apartments) incident at a point, are well ordered with respect to the ordering on collections of vectors induced by $\prec.$
\end{corollary}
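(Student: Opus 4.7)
The plan is to reduce the corollary directly to the preceding lemma. First I would observe that if a Soul\'e cube $S$ is incident at a $0$-cell $p$, then the triple of vectors decorating $S$ is a subset of the six vectors decorating $p$. This is because decorations are sets of minimal vectors, and as one specializes from a quadratic form in the interior of $S$ to the quadratic form at the boundary point $p$, additional primitive vectors may join the set of minimal vectors while no existing minimal vector is ever expelled. In particular, each of the sixteen cubes at $p$ listed in Table \ref{Tab:SouleComplexIncidence} is encoded by a distinct $3$-element subset of these six vectors (distinctness because the cube is recovered as the closed convex set of quadratic forms sharing exactly that triple as minimal vectors).

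Next I would invoke the preceding lemma, which asserts that the six vectors at $p$ are well ordered under $\prec$; its proof in fact rules out any $\approx$-ties among these six vectors, so they form a strict chain $u_1 \prec u_2 \prec \cdots \prec u_6$. Every cube incident at $p$ therefore corresponds to a distinct $3$-element subset of this totally ordered chain.

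The remaining step is to unpack the definition of the ordering induced on collections of vectors. The recipe of iteratively extracting and comparing $\prec$-minima, then discarding them and repeating, amounts precisely to lexicographic comparison of the two $3$-subsets written in ascending $\prec$-order. Since $u_1, \ldots, u_6$ are pairwise strictly ordered, any two distinct $3$-subsets of the chain have distinct sorted sequences, so their lexicographic comparison must terminate with a strict inequality rather than an $\approx$-tie. Hence the induced relation $\prec$ is a linear order on the finite set of cubes at $p$, which on a finite set is the same as a well-order.

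I do not expect a serious obstacle here; the substance of the argument was already carried by the previous lemma. The only place that warrants a moment's care is the first step, namely verifying that membership of $S$'s triple inside $p$'s decoration is forced by the minimal-vector description of decorations, and that no two distinct cubes at $p$ give rise to the same triple. Once these are in hand, the rest is bookkeeping about the induced order.
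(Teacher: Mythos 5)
Your proposal is correct and follows essentially the same route as the paper: reduce to the preceding lemma by noting that every cube incident at a $0$-cell is decorated by a $3$-element subset of that vertex's six-vector decoration, so the strict $\prec$-chain on those six vectors forces any two distinct triples to be comparable under the induced ordering. Your extra unpacking of the induced order as lexicographic comparison of sorted subsets is a welcome elaboration of a step the paper leaves implicit, but it is not a different argument.
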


\begin{proof}
 A cube incident at a 0-cell decorated by $\Sigma = \{v_1, v_2, v_3, v_1+v_2. v_1+v_3, v_1+v_2+v_3\}$ is in turn decorated by a triplet which is a subset of $\Sigma.$  The well ordering of $\Sigma$ tells us that if we compare two distinct triplets from $\Sigma,$ one will always be smaller than the other.
It is an easy exercise to show that the above is true for any point in the well-rounded retract, as the set decorating a $p$-cell containing an arbitrary point is always a subset of the set decorating a 0-cell in the complex.
\end{proof}
\begin{definition}
 For an arbitrary point $p$ in the well-rounded retract, the \emph{minimal cube for p} is the smallest $3$-cell in its star.
\end{definition}
\begin{definition}
 For a $3$-cell $S$, the \emph{minimal set for S} denoted $\Xi(S)$ is the collection of lower dimensional cells $C \in \partial(S)$ such that the minimal cube for points in $C$ is different than $S$.  Cells in $\Xi(S)$ are called \emph{minimal cells for S.}
\end{definition}
We caution the reader not to confuse $\Xi(S)$ with the decoration for $S,$ $\{v_1, v_2, v_3\},$ which is in fact the set of integral vectors on which quadratic forms in $S$ are minimal.

The next two results help us visualize the geometry of $\Xi(S)$.  In particular, we aim to show that the number of minimal faces of $S$ is less than or equal to five, and that $\Xi(S)$ forms a connected set.

\begin{lemma} \label{NumberMinimal}
For a generic top-dimensional cell $S \subset W_3$, at most five of its ten $2$-dimensional faces are in $\Xi(S).$
\end{lemma}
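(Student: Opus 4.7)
The plan is to reduce the condition ``$F \in \Xi(S)$'' to a simple inequality on the vectors decorating $S$, and then count separately over hexagons and triangles. Any cube $T$ sharing a $2$-face with $S = \{v_1, v_2, v_3\}$ differs from $S$ by swapping exactly one vector $v_i$ for a new primitive vector $u$, so $T = (S \setminus \{v_i\}) \cup \{u\}$. Unpacking $\prec$ on decorations (sort the triples and compare lexicographically), the two common vectors contribute identically to both sorted triples, so $T \prec S$ collapses to the single comparison $u \prec v_i$. Fixing $v_1 \preceq v_2 \preceq v_3$, this yields the following criteria: a hexagonal face with new vector $v_a + \epsilon v_b$ lies in $\Xi(S)$ iff $v_a + \epsilon v_b \prec \max_\prec(v_a, v_b)$; a triangular face with new vector $\sum_i \epsilon_i v_i$ lies in $\Xi(S)$ iff $\sum_i \epsilon_i v_i \prec v_3$.

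The hexagonal count proceeds as follows. I partition the six hexagons into three pairs indexed by the unordered pair $\{v_a, v_b\} \subset \{v_1, v_2, v_3\}$. If $\{v_a, v_b\}$ is fundamental, then by definition both $v_a \pm v_b \succ \max_\prec(v_a, v_b)$, so neither hexagon in that pair lies in $\Xi(S)$. If not, Lemma~\ref{NormRankTwo} gives $\max\{|v_a + v_b|, |v_a - v_b|\} > \max\{|v_a|, |v_b|\}$, so the sign realizing the maximum produces a hexagon whose new vector is $\succ \max_\prec(v_a, v_b)$, and hence is not in $\Xi(S)$. Either way, each pair contributes at least one hexagon outside $\Xi(S)$, so at most three hexagons can lie in $\Xi(S)$.

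The triangular count uses the parallelogram law. Modulo a global sign, the four triangular decorations pair by the sign of $\epsilon_3$ as $\{(+,+,+),(+,+,-)\}$ and $\{(+,-,+),(+,-,-)\}$. In each pair the parallelogram identity gives
\[
|w_{\epsilon_1 \epsilon_2 +}|^2 + |w_{\epsilon_1 \epsilon_2 -}|^2 = 2|\epsilon_1 v_1 + \epsilon_2 v_2|^2 + 2|v_3|^2 > 2|v_3|^2,
\]
the strict inequality holding since $v_1$ and $v_2$ are linearly independent. Hence in each pair at least one $w$ satisfies $|w| > |v_3|$, so $w \succ v_3$ and the corresponding triangular face is outside $\Xi(S)$. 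This bounds the triangular contribution to $\Xi(S)$ by two, and combining with the hexagonal bound yields at most $3 + 2 = 5$ of the ten $2$-faces in $\Xi(S)$.

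The principal technical obstacle is the set-level reduction $T \prec S \iff u \prec v_i$: although natural, its verification requires a careful case analysis on where $u$ and $v_i$ sit in the sorted triple relative to the two common vectors, and a check that the $\approx$-equivalence (equal absolute values in every coordinate) is handled consistently. Once this reduction is in place, the rest is a bookkeeping exercise using Lemma~\ref{NormRankTwo} and the parallelogram identity.
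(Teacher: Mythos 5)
Your proof is correct and follows essentially the same strategy as the paper: pair the six hexagons by the unordered pair $\{v_a,v_b\}$ they involve (handled via the fundamental-pair dichotomy and Lemma~\ref{NormRankTwo}) and pair the four triangles by reflecting through the $\prec$-largest decorating vector, showing at most one face per pair lies in $\Xi(S)$, for a total of $3+2=5$. The only differences are cosmetic and both to your credit: you substitute the parallelogram identity for the paper's inner-product argument (the one modeled on Lemma~\ref{NormRankOne1}) in the triangle pairs, and you make explicit the reduction of ``$F\in\Xi(S)$'' to the single comparison $u\prec v_i$ of the swapped vectors, a sorted-multiset fact the paper uses implicitly.
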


\begin{proof}
In justifying this claim, first let us focus on hexagonal faces.  Without loss of generality, consider the face defined by $v_1, v_2, v_3,$ and $v_1+v_2.$  The cubes incident to this face other than $S$ all contain $v_1+v_2$ in their decorating set. However, if for a moment we assume that $\{v_1, v_2\}$ does not form a fundamental pair, then according to Lemma \ref{NormRankTwo}, $\max \{ |v_1 \pm v_2| \} > \max \{ |v_1|, |v_2|\} \ge \min \{ |v_1 \pm v_2| \}.$  This in turn tells us that at most one of the two hexagonal faces defined by $\{v_1, v_2, v_3, v_1 \pm v_2 \},$ is potentially in the minimal set for $S$, whereas for the other (opposite) hexagonal face, $S$ is the minimal cube in its star.  If instead we are in the case where $\{v_1, v_2\}$ form a fundamental pair, then both of the hexagonal faces $\{v_1, v_2, v_3, v_1 \pm v_2 \}$ are not in $\Xi(S).$  Therefore, we can pair up the six hexagonal faces to determine that at most three of them will be in $\Xi(S)$.  A similar argument leads to pairing of the four triangular faces.  Namely, let the triangular face decorated by $\{v_1, v_2, v_3, v_1+v_2+v_3\}$ be in $\Xi(S)$.  Therefore, $|v_1 + v_2 + v_3| \le \max\{|v_1|, |v_2|, |v_3|\}.$  Without loss of generality let $|v_1| = \max\{|v_1|, |v_2|, |v_3|\}.$  Then, following an argument similar to the one in Lemma \ref{NormRankOne1}, it follows immediately that $|v_1 - (v_2 + v_3)| > |v_1|.$  Hence, the triangular face decorated by $\{v_1, v_2, v_3, v_1 - v_2 - v_3\}$ is not in $\Xi(S).$  In this fashion one can organize the four triangular faces of $S$ in two disjoint pairs such that if one face is in $\Xi(S)$, the face paired with it is not.
\end{proof}

\noindent We also address the question of connectedness of the minimal set for each apartment.

\begin{Theorem}
The cells in $\Xi(S)$ form a connected set.
\end{Theorem}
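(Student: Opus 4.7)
The plan is to establish connectedness of $\Xi(S)$ by a combinatorial analysis inside the Soul{\'e} cube $S$, combining the pairing from Lemma \ref{NumberMinimal} with the adjacency structure of the polytope's ten 2-faces.

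First, I would show that the collection of minimal hexagonal 2-faces is already connected. The six hexagons of $S$ come in three opposite pairs, and Lemma \ref{NumberMinimal} guarantees that each pair contributes at most one minimal face to $\Xi(S)$. Any two non-opposite hexagons of the Soul{\'e} cube share an edge of $S$, as can be read off from Figure \ref{fig:SouleCubeMapleDecorated} and Table \ref{Tab:SouleComplexIncidence}. Since distinct minimal hexagons come from distinct pairs and are therefore non-opposite, they pairwise share an edge of $S$, so their union is connected through the $1$-skeleton.

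The main step is to attach each minimal triangular 2-face $T$ to the union of minimal hexagons. After possibly replacing some $v_i$ by $-v_i$, I can write the decoration of $T$ as $\{v_1, v_2, v_3, v_1+v_2+v_3\}$; the three hexagons sharing an edge of $S$ with $T$ are then $H_{v_1+v_2}$, $H_{v_1+v_3}$, and $H_{v_2+v_3}$. By hypothesis $T \in \Xi(S)$, so there is some $c\in\{1,2,3\}$ with $v_1+v_2+v_3 \prec v_c$. Writing $v_i+v_j = (v_1+v_2+v_3) - v_c$ for $\{i,j\} = \{1,2,3\}\setminus\{c\}$ and exploiting both the Euclidean estimate $|v_1+v_2+v_3| \le |v_c|$ and the $\mathbb{Z}$-basis condition on $\{v_1,v_2,v_3\}$, I would show that at least one of the three pairwise sums $v_i+v_j$ satisfies the minimality criterion $v_i+v_j \prec \max\{v_i, v_j\}$, making the corresponding hexagon minimal and adjacent to $T$. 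In edge cases where the lexicographic tiebreaker in $\prec$ is active and no directly adjacent hexagon is minimal, I would instead locate a 1-cell on the boundary of $T$ whose star contains a cube strictly smaller than $S$; such an edge lies in $\Xi(S)$ and is also on the boundary of an adjacent hexagon, providing the connection. A final sweep then handles any lower-dimensional cells of $\Xi(S)$ not already in the closure of a minimal 2-face: for such a cell $C$, the smaller cube $S'$ incident to $C$ forces a 2-face of $\partial S$ containing $C$ (or joined to $C$ along an edge) to itself be minimal, ruling out isolation.

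The main obstacle is the detailed case analysis needed for the triangle-to-hexagon step. The pre-order $\prec$ uses a Euclidean norm comparison supplemented by a lexicographic tiebreaker, so the argument must track both; moreover, the $\mathbb{Z}$-basis condition is essential, since over $\mathbb{R}$ one can exhibit triples $\{v_1,v_2,v_3\}$ with $|v_1+v_2+v_3| \le |v_1|$ but $|v_i+v_j| \ge \max\{|v_i|,|v_j|\}$ for every pair. The integrality of the $v_i$ and the fact that $\det(v_1 \mid v_2 \mid v_3) = \pm 1$ is what rules out these pathological configurations and produces the required adjacency.
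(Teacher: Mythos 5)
Your outline is right---the minimal hexagons are pairwise non-opposite by Lemma \ref{NumberMinimal}, hence connected through shared edges of the truncated cube, and the whole theorem reduces to attaching each minimal triangle to a minimal hexagon---but the decisive step is asserted rather than proved. You write that from $|v_1+v_2+v_3|\le|v_c|$, the identity $v_i+v_j=(v_1+v_2+v_3)-v_c$, and the $\mathbb{Z}$-basis condition you ``would show'' that some adjacent hexagon is minimal. That claim \emph{is} the theorem, and your own observation that real triples (three coplanar unit vectors at $120^\circ$, say) satisfy $|v_1+v_2+v_3|\le\max|v_i|$ while $|v_i+v_j|\ge\max\{|v_i|,|v_j|\}$ for every pair shows that no amount of norm bookkeeping from these two inputs alone can close the argument. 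You correctly sense that integrality and $\det(v_1\mid v_2\mid v_3)=\pm1$ must enter, but you never say how, and the fallback of ``locating a $1$-cell on the boundary of $T$ whose star contains a smaller cube'' is likewise unsubstantiated.

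The paper closes exactly this gap with two ingredients absent from your proposal. First, Lemma \ref{ConnectedOneNew}: if $|v_i+v_j|\ge\max\{|v_i|,|v_j|\}$ \emph{and} $(v_k,v_i+v_j)\ge 0$, then $|v_1+v_2+v_3|>\max\{|v_1|,|v_2|,|v_3|\}$. Its contrapositive, applied to a minimal triangle, gives for every split $\{i,j\},\{k\}$ either $|v_i+v_j|<\max\{|v_i|,|v_j|\}$ (an adjacent minimal hexagon---done) or $(v_k,v_i+v_j)<0$. If the second alternative holds for all three splits, the resulting inner-product inequalities combine with $v_i+v_j\succeq v_i,v_j$ to force $|v_i-v_j|>\max\{|v_i|,|v_j|\}$ as well, i.e.\ every pair is fundamental. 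Second, Theorem \ref{MainTheorem}---which is where the unimodularity actually does its work---says the only such basis is $\{\pm e_1,\pm e_2,\pm e_3\}$, whose cube has $\Xi(S)=\emptyset$ and in particular carries no minimal triangle; contradiction. So the obstruction is not an edge case of the lexicographic tiebreaker to be patched locally, but a single exceptional cube excluded at the outset. Unless you reprove something equivalent to Lemma \ref{ConnectedOneNew} and Theorem \ref{MainTheorem}, the triangle-to-hexagon step in your proposal remains a genuine gap.
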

\begin{proof}
Since two opposite hexagons can not both be minimal as seen in the proof of Lemma \ref{NumberMinimal}, the set of minimal hexagons always form a connected set.  Therefore, we only need to concern ourselves with the triangles in any given 3-cell.  First we need the following result:

\begin{lemma} \label{ConnectedOneNew}
If $|v_i+v_j| \ge \max\{ |v_i|, |v_j| \}$ and $(v_k, v_i+v_j)\ge0$ for some $1 \le i, j,k \le 3$ distinct, then $|v_1 + v_2 + v_3| > \max \{|v_1|, |v_2|, |v_3|\}.$
\end{lemma}
\begin{proof}
\begin{align*}
|v_1 + v_2 + v_3| ^ 2 &= (v_1 + v_2 + v_3, v_1+v_2+v_3)\\ &= |v_k|^2 + 2 (v_k, v_i+v_j) + |v_i + v_j|^2 > |v_1|^2, |v_2|^2, |v_3|^2.
\end{align*}
\end{proof}
Now we pick up the proof of the theorem and assume that the triangular face decorated by $\{v_1, v_2, v_3, v_1 + v_2 + v_3\}$ is minimal in the 3-cell defined by $\{v_1, v_2, v_3\}$, i.e., $v_1 + v_2 + v_3 \prec \max \{v_1, v_2, v_3\}.$  To conclude the proof it would suffice to show that one of the three neighboring hexagons, decorated by $\{v_1, v_2, v_3, v_i+v_j\}\, 1\le i<j\le 3,$ is also minimal.
From the minimality of the triangle it follows that $|v_1 + v_2 + v_3| \le \max \{|v_1|, |v_2|, |v_3|\}.$  By the contrapositive of Lemma \ref{ConnectedOneNew}, it follows that for all distinct triples $1 \le i, j,k \le 3$, either $|v_i+v_j| < \max\{ |v_i|, |v_j| \}$ or $(v_k, v_i+v_j)<0.$  If for at least one such triple, we have $v_i+v_j \prec \max \{v_i, v_j\}$, then the proof is complete.  Therefore we may assume that for all distinct triples $1 \le i, j,k \le 3$, the following two relations hold:
\begin{enumerate}
 \item $v_i+v_j \succeq v_i, v_j$;
\item $(v_k, v_i+v_j)<0.$
\end{enumerate}
However note that from relation (1), it follows that for all pairs of vectors we have $(v_i ,v_j) \ge  -\frac{1}{2}\min\{|v_i|^2, |v_j|^2\}.$  On the other hand, since relation (2) yields $(v_k, v_i + v_j) = (v_k, v_i) + (v_k , v_j) < 0,$ it follows that for all pairs of vectors, $(v_i ,v_j) < \frac{1}{2}\{|v_i|, |v_j|\},$ and consequently $|v_i - v_j|>\max\{|v_i|, |v_j|\}.$  Thus for each pair of vectors, $v_i \pm v_j \succ |v_i|, |v_j|.$  By Theorem \ref{MainTheorem}, the only 3-cell for which this is true, is the fundamental cell with decorating set $\{e_1, e_2, e_3\},$ and in this case $\Xi(S) = \emptyset.$
\end{proof}

We offer one final Lemma that is related to the geometry of the set $\Xi(S)$ and is used in \S\ref{SubSect:BetterTrajectories}.  It can be summarized as saying that if in a given cube $S$ three hexagonal faces incident to the same triangle $T$ are in $\Xi(S)$, then $T\in \Xi(S)$ as well.

\begin{lemma} \label{ConnectedTwo}
If $|v_i+v_j| \le \max\{ |v_i|, |v_j| \}, 1 \le i < j \le 3,$ then 
$$|v_1 + v_2 + v_3| < \max \{|v_1|, |v_2|, |v_3|\}.$$ 
\end{lemma}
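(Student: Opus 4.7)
The plan is to expand $|v_1+v_2+v_3|^2$ using the inner product, bound the cross terms using the hypotheses, and compare to $\max\{|v_1|^2,|v_2|^2,|v_3|^2\}$. This is the same flavor as the computation used in Lemma \ref{NormRankOne1} and in the proof of Theorem \ref{MainTheorem}.

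First I would translate each of the three hypotheses into an inner product inequality. Squaring $|v_i+v_j|\le\max\{|v_i|,|v_j|\}$ and expanding yields
\begin{equation*}
2(v_i,v_j)\;\le\;\max\{|v_i|^2,|v_j|^2\}-|v_i|^2-|v_j|^2\;=\;-\min\{|v_i|^2,|v_j|^2\}
\end{equation*}
for each of the three pairs $1\le i<j\le 3$. Next I would relabel so that $|v_1|\ge|v_2|\ge|v_3|$; note that the vectors appear as part of the decoration of a $3$-cell, so each $v_i$ is primitive and in particular nonzero, and hence $|v_3|>0$. With this ordering, the minimum in the three inequalities is $|v_2|^2$, $|v_3|^2$, and $|v_3|^2$ respectively, so
\begin{equation*}
2(v_1,v_2)\le -|v_2|^2,\qquad 2(v_1,v_3)\le -|v_3|^2,\qquad 2(v_2,v_3)\le -|v_3|^2.
\end{equation*}

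Now I would expand and substitute:
\begin{align*}
|v_1+v_2+v_3|^2 &= |v_1|^2+|v_2|^2+|v_3|^2 + 2(v_1,v_2)+2(v_1,v_3)+2(v_2,v_3)\\
&\le |v_1|^2+|v_2|^2+|v_3|^2 - |v_2|^2 - |v_3|^2 - |v_3|^2\\
&= |v_1|^2 - |v_3|^2.
\end{align*}
Since $v_3\ne0$ we have $|v_3|^2>0$, so $|v_1+v_2+v_3|^2<|v_1|^2=\max\{|v_1|^2,|v_2|^2,|v_3|^2\}$, which is the desired conclusion.

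There is essentially no hard step here; the only thing to be careful about is the bookkeeping that matches each inner product bound to the correct $\min$ under the assumed ordering. The strict inequality comes for free from the nonzero (in fact primitive integral) nature of the decorating vectors, which ensures $|v_3|>0$.
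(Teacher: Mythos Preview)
Your proof is correct and follows essentially the same approach as the paper: order the vectors by norm, convert the hypotheses into inner-product bounds $2(v_i,v_j)\le-\min\{|v_i|^2,|v_j|^2\}$, expand $|v_1+v_2+v_3|^2$, and use nonvanishing of the decorating vectors for strictness. The only cosmetic difference is that the paper groups two of the vectors and uses the bound $|v_1+v_2|^2<|v_1|^2+|v_2|^2$ directly, whereas you sum all three pairwise inner-product bounds; the resulting estimate $|v_1+v_2+v_3|^2\le|v_{\max}|^2-|v_{\min}|^2$ is the same.
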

\begin{proof}
Without loss of generality let $|v_3| = \max \{|v_1|, |v_2|, |v_3|\}.$ Note that,
$$|v_i+v_3| \le |v_3| \implies (v_i, v_3) \le -\frac{1}{2}|v_i|^2,\,\,\, i=1,2. $$
Therefore,
$$(v_1+v_2,v_3) \le -\frac{1}{2}|v_1|^2 - \frac{1}{2}|v_2|^2.$$
Consequently,
\begin{align*}
|v_1 + v_2 + v_3| ^ 2 &= (v_1 + v_2 + v_3, v_1+v_2+v_3)\\ &= |v_3|^2 + 2(v_1+ v_2,v_3) + |v_1 + v_2|^2 \\
&\le |v_3|^2 - |v_1|^2 - |v_2|^2 + |v_1 + v_2|^2 < |v_3|^2. 
\end{align*}
This concludes the proof of the lemma.
\end{proof}

\begin{Example}
Consider the apartment $S$ defined by the columns $v_1, v_2,$ and $v_3$ of $\left(\begin{matrix}1 & 4 & 2 \\ 0 & 1 & 1 \\ 0 & 0 & 1\end{matrix}\right) \in SL_3(\mathbb{Z}).$  Here, clearly the minimal set of hexagonal faces are the three faces each decorated by the union of $\{v_1, v_2, v_3\}$ and one of the set $\{v_1 - v_2, v_1 - v_3, v_2 - v_3\}.$  Also minimal are the two triangles decorated by $\{v_1, v_2, v_3, v_1-v_2+v_3\},$ and $\{v_1, v_2, v_3, v_1+v_2-v_3\}.$  These are all of the minimal faces in $\Xi(S).$
\end{Example}

\begin{definition} \label{W3:new_distance_def}
 To each point $p \in W_3$ we assign an integer $d(p)$ called the distance from $p$ to the fundamental cell, defined as 
$$d(p):=\min_S\{D(S)\},$$
where the minimum is taken over all $3$-cells $S \in W_3$ that are incident to $p$.  This definition depends on the definition of $D(S)$, similarly called the distance from $S$ to the fundamental cell, defined as
$$D(S) := \begin{cases}0 & S\text{ is the fundamental cell}, \\ \max_{p\in \Xi(S)}\{d(p)\} +1 & \text{otherwise.}\end{cases}$$
One can check that $d$ and $D$ are well-defined, and that for $p \in S \setminus \Xi(S)$, $d(p) = D(S),$ as expected.
\end{definition}
\begin{definition} The subset $W_3(n) \subset W_3, n = 0,1,2,3,\dots$ is defined as 
$$W_3(n):=\{p \in W_3 \mid d(p) \le n\}.$$
For notational convenience, we define the set $W_3(-1)$ to be the point corresponding to the equivalence class represented by quadratic form $$Q(x_1, x_2, x_3) = x_1^2 + x_2^2 + x_3^2.$$
\end{definition}

The algorithm below outlining the contraction of the well-rounded retract in rank two should be compared against the contraction algorithm for $W_2$ as presented in \S\ref{Subsection:SerreTree}.
\begin{algorithm}\label{Algorithm:W3} Algorithm for assigning trajectories to points in $W_3$:
\begin{enumerate}[leftmargin=0.2 in]
\item Points in $S$ for which $S$ is minimal are assigned trajectories in $S$, terminating at $\Xi(S).$ \label{rank2-rule-one}
In particular, each cell in the well-rounded retract of co-dimension greater than zero is minimal for all but a single cube, namely, its minimal cube.  Consequently points in this cell are only assigned trajectories with respect to a single top-dimensional cube.  Therefore, to points in $\Xi(S)$ we associate trajectories outside of $S$.
\item Points in the fundamental cell are assigned linear trajectories to the center of the cube, $W_3(-1).$ \label{rank2-rule-two}
\end{enumerate}
\end{algorithm}
In accordance with steps \ref{rank2-rule-one}, and \ref{rank2-rule-two} above, to each point $p \in W_3(n) \subset W_3, n=0,1,2,\dots$ we associate a unique trajectory terminating at $W_3(n-1)$.  Using the Euclidean metric inherited from the five dimensional, globally symmetric space, we can parametrize these trajectories with constant speed as,
$$\phi_p: [0, 1] \rightarrow W_3, \,\,\,\, \phi_p(0) = p\in W_3(n),\,\,\,\,\, \phi_p(1) \in W_3(n-1).$$
\begin{definition}\label{Definition:h3}
The contraction of the well-rounded retract in rank two is defined recursively as,
\begin{align*}
 h_3:[0, 1] \times W_3 &\longrightarrow W_3\\
\left[\frac{1}{2^{n+1}}, \frac{1}{2^{n}}\right]\times W_3(n) &\longmapsto  W_3(n), n=0,1,2,\dots\\
(t, p) &\longmapsto \phi_p\left(2^{n+1}t -1\right).
\end{align*}
\end{definition}

There are details not addressed in the above construction.  Namely, the manner in which we contract each cube to its minimal set, as well as the continuity of the overall contraction of the well-rounded retract.  We offer a solution to the first problem in \S\ref{SubSect:BetterTrajectories}.  The argument that the contraction is continuous is presented in \S \ref{SubSect:Continuity}.

\subsubsection{Trajectories within each cube}\label{SubSect:BetterTrajectories}

In this section we describe one approach to continuously assigning trajectories to points inside a generic cube $S$, terminating at $\Xi(S).$  Here, we mean continuity as it relates to the space of paths inside one of these top-dimensional cells.  

There is more than one way to tackle this problem, and perhaps the most natural is to write down a system of ordinary differential equations modelled in such a way that the faces $\Xi(S)$ act as \emph{attractors} for neighbouring points in $S$. This can be done without too much difficulty, however the solutions to this system, which are functions parametrizing trajectories in our contraction, are not transparent nor easily manipulated.  In particular, there is no guarantee that such trajectories will satisfy any sort of invariance under the action of $GL_3(\mathbb{Z})$.  

We aim to develop a way of assigning trajectories within each cube in such a way, so that for $k = 0, 1, 2$, the union of all trajectories swept out by points in each $k$-cell equals a union of $(k+1)$-cells, each a translate of one found in the tetrahedra making up the fundamental domain in Figure \ref{fig:FundamentalDomainSouleCube}; see Theorem \ref{Theorem:MainTheorem} in \S \ref{SubSect:Specification}. 

In the non-generic case of the fundamental cube, we specify that all points are to follow trajectories to the center of the cube along straight line segments.  In the generic case, as before $S$ is a cube in the Soul{\'e} complex decorated by $\{v_1, v_2, v_3\}$.  In specifying trajectories for points in/on $S$, we will use the triangulation by translates of the tetrahedra seen in Figure \ref{fig:FundamentalDomainSouleCube}.  First we assign a trajectory to the center of the cube $o$ terminating at $\Xi(S)$.  There are several cases to consider:
\begin{enumerate}[leftmargin=0.3 in]
 \item If there are three minimal hexagons:
\begin{enumerate}[leftmargin=0.2 in]
\item If there is an element $g \in SL_3(\mathbb{Z})$ which both stabilizes $S$ and maps the three minimal hexagons to the ones decorated by $\{v_1, v_2, v_3, v_1 + v_2\}, \{v_1, v_2, v_3, v_2 + v_3\}, \{v_1, v_2, v_3, v_1 + v_3\},$ then to $o$ we assign the line segment to the center of the triangular face decorated by $\{v_1, v_2, v_3, v_1+v_2+v_3\},$ which is minimal by Lemma \ref{ConnectedTwo}.
\item If there is an element $g \in SL_3(\mathbb{Z})$ which both stabilizes $S$ and maps the three minimal hexagons to the ones decorated by $\{v_1, v_2, v_3, v_1 +\- v_2\}, \{v_1, v_2, v_3, v_2 - v_3\}, \{v_1, v_2, v_3, v_1 - v_3\},$ then to $o$ we assign the line segment to the vertex of $S$ that is at the intersection of the three minimal hexagons.
\end{enumerate}
\item If there are only two minimal hexagons, then to $o$ we assign the line segment to the vertex at the intersection of the two minimal hexagons and a triangular face in the cube.
\item If there is a single minimal hexagon, then to $o$ we assign the line segment to the center of the minimal hexagon.
\end{enumerate}
We use $\tilde{o} \in \Xi(S)$ to denote the terminal point of the trajectory originating at $o$.
Next we classify the tetrahedra in $S$ based on their relation to $\Xi(S),$ and $\tilde{o}.$
\begin{itemize}[leftmargin=0.2 in]
 \item Tier I: These are tetrahedra whose exterior face is on a 2-cell not in $\Xi(S).$
 \item Tier II: Tetrahedra in $S$ not in tier I or III.  These can also be classified as tetrahedra having a 2-dimensional intersection with a cell in $\Xi(S)$, sharing 1-cells with no more than one minimal hexagon, and not intersecting any minimal triangle containing $\tilde{o}$ in its interior.
\item Tier III: Tetrahedra in $S$ sharing a 1-cell with two minimal hexagons, or with a minimal hexagon and a (minimal) triangle containing $\tilde{o}$ in its interior.
\end{itemize}

In the first stage, we assign trajectories to tier I tetrahedra terminating in the closure of tier II tetrahedra.  In the second and most involved stage, we assign trajectories to points in tier II tetrahedra to the union of $\Xi(S)$ and the closure of tier III tetrahedra, and in the final, third stage we assign trajectories to points in tier III tetrahedra to $\Xi(S).$

\textbf{Stage I:} Consider a tier I tetrahedron described as a convex hull (in Euclidean space) of its vertex set. From the description of tier I tetrahedra, we know that at least one of these vertices is not in $\Xi(S).$  To each of these non-minimal vertices we assign a trajectory in the form of a line segment to the center of the cube.  Using the convex hull description, we can then assign trajectories to points in all tier I tetrahedra terminating to (the boundaries of) tier II and III tetrahedra.

\textbf{Stage II:} First we assign trajectories to tier II tetrahedra having a 2-dimensional face (support) on a minimal triangle (not containing $\tilde{o}$ in their interior by the definition of tier II tetrahedra).
\begin{itemize}[leftmargin=.2 in]
\item Case I: The triangle is flanked by only one minimal hexagon.  We can visualize this case in Figure \ref{fig:TriangleFlankedByHex} where, as an example, we have used the case where the hexagon at $v = 1$ is minimal.
\begin{figure}[h]
\begin{center}
\includegraphics[scale=0.7,trim=45mm 175mm 45mm 25mm, clip]{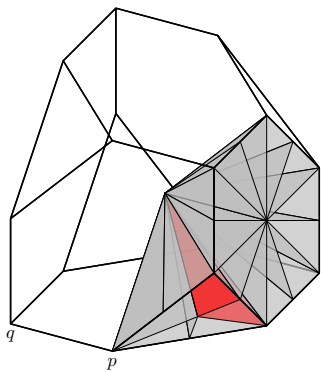} \caption{Minimal triangle flanked by a single minimal hexagon}
\label{fig:TriangleFlankedByHex}
\end{center}
\end{figure}
We pair the grey tetrahedra supported on the minimal triangle into two pairs based on the non-minimal hexagons they share in addition to the minimal triangle.  We assign trajectories to points in these two pairs of grey colored tetrahedra by projecting linearly from the centers of the two non-minimal hexagonal faces they each share a 1-cell with respectively.  The trajectories terminate at the boundaries of the red tetrahedra, the minimal triangle itself, and the 2-cell described by the points $o$, the center of the triangle, and the point $p$, found at the intersection of the triangle and the two non-minimal hexagons flanking it.  Next we assign trajectories to points in the red tetrahedra and this 2-cell by linearly projecting from the cube corner $q$ shared by the two non-minimal hexagonal faces flanking the minimal triangle.  The trajectories terminate upon entering the union of the minimal triangle and the tetrahedra having a 2-dimensional intersection with a  minimal hexagon.
\item Case II: The triangle is flanked by two minimal hexagons. We can visualize this case in Figure \ref{fig:TriangleFlankedByTwoHex} where the tetrahedra supported on the minimal hexagons are colored light gray.
\begin{figure}[h]
\begin{center}
\includegraphics[scale=0.7,trim=45mm 175mm 45mm 25mm, clip]{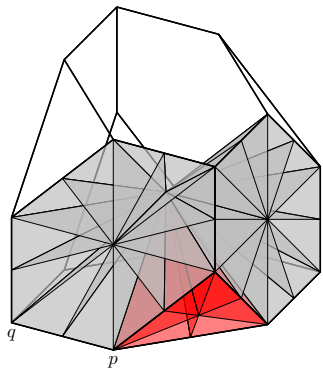} \caption{Minimal triangle flanked by two minimal hexagons}
\label{fig:TriangleFlankedByTwoHex}
\end{center}
\end{figure}
We can again use the $(u, v, w)$-coordinates from Figure \ref{fig:SouleCubeMapleDecorated}, and think of the minimal hexagons as described by $v=1$ and $u = 1,$ respectively. In this case, we assign trajectories to points in the red tetrahedra by using a linear projection from the point $(u, v, w) = (-1, -1, -1)$.  As before, these trajectories terminate at the union of the minimal triangle itself and the tetrahedra supported on the two minimal hexagons.
\end{itemize}
We fix a small $\delta>0$, and continue Stage II by focusing on the remaining tier II tetrahedra, namely those supported on a minimal hexagon.  These too come in two flavors:
\begin{itemize}[leftmargin=0.2 in]
\item Case I: Tetrahedron $T$ shares a 2-dimensional face with a tier III tetrahedron. In this case consider $C,$ the center of the 1-cell connecting $o$ to the vertex of $T$ on the minimal hexagon not on the adjacent tier III tetrahedron.  We assign trajectories to points in $T$ by linearly projecting from $\tilde{C}$, a point on the line segment connecting the center of the minimal hexagon to $C$, a distance $\delta$ outside of $T$.  The trajectories terminate at the union of the minimal hexagon, or the adjacent tier III tetrahedron.
\item Case II: Tetrahedron $T$ does not share a 2-dimensional face with a tier III tetrahedron.  This case is more involved and to gain better understanding we label the vertex set $V_1,\dots, V_4,$ with $V_1$ being the center of the associated minimal hexagon, and $V_4 = o.$  We refer the reader to Figure \ref{fig:TetrahedronContraction}.  Let $C_1$ and $C_2$ be the centers of the segments from $V_4$ to $V_2$ and $V_3$, respectively. As before let $\tilde{C}_1$ and $\tilde{C}_2$ be the points on the lines $\overline{V_1C_1}$ and $\overline{V_1C_2}$, a distance $\delta$ ``behind'' $C_1$ and $C_2$ respectively.  In this case we assign trajectories based on a linear projection from a line segment $L$, just outside of the tetrahedron, connecting $\tilde{C}_1$ and $\tilde{C}_2.$  Specifically, for a point $M$ inside the tetrahedron, we express it as a linear combination of the vectors $v_1 = V_1-V_4, v_2 = V_2 - V_4$, and $v_3 = V_3 - V_4$: $M = \sum\lambda_iv_i,$ with $\sum \lambda_i \le 1.$  Set $\tilde{M} = 0v_1 + \lambda_2v_2 + \lambda_3v_3,$ with $\lambda_2 + \lambda_3 = c<1.$  Note also, that $L$ can be described as $L = \theta_1\tilde{v}_1 + \theta_2\tilde{v}_2$, where $\tilde{v}_1 = \tilde{C}_1 - V_4, \tilde{v}_2 = \tilde{C}_2 - V_4,$ and $\theta_1 + \theta_2 = 1.$ We now pick, $p_M \in L$, based on the proportion of $\lambda_2$ to $\lambda_3.$  More specifically, we let $p_M = \frac{\lambda_2}{c}\tilde{v}_1 + \frac{\lambda_3}{c}\tilde{v}_2.$  The trajectory associated to $M$ is described by the line segment from $p_M$ to $M$ terminating at the face with vertices $\{V_1,V_2,V_3\}$ (which is in $\Xi(S)$), or the edge $\overline{V_1V_4}$ (which is in a tier III tetrahedron).

\begin{figure}[h]
\begin{center}
\includegraphics[scale=1.5,trim=75mm 225mm 75mm 25mm, clip]{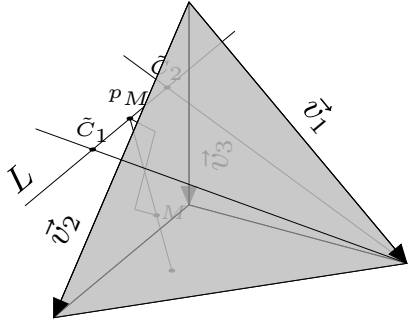} \caption{Contraction of a model tetrahedron.}
\label{fig:TetrahedronContraction}
\end{center}
\end{figure}

\end{itemize}

\textbf{Stage III:} To all points in tier III tetrahedra we assign trajectories terminating at $\Xi(S)$ by mapping $o$ to $\tilde{o}$ linearly and thinking of the tetrahedra as convex hulls of their vertex sets.

This concludes the description of trajectories terminating at $\Xi(S),$ assigned to points in $S \setminus \Xi(S)$.
\subsubsection{Continuity}\label{SubSect:Continuity}
In this section we say a few words about the continuity of $h_3$ described in Algorithm \ref{Algorithm:W3}.

Observe that since $h_3 : [0,1] \times W_3 \rightarrow W_3$ is defined recursively, with $W_3(n)$ contracting onto $W_3(n-1)$ in the time referenced by $t \in \left[\frac{1}{2^{n+1}},\frac{1}{2^{n}}\right],$ it suffices to show continuity of each of these individual mappings.

On the other hand, each intermediate map $\left[\frac{1}{2^{n+1}},\frac{1}{2^{n}}\right] \times W_3(n) \rightarrow W_3(n)$ is entirely determined by the trajectories parametrized in constant speed within each cube $S \in W_3(n)$.  There is no interaction across boundary faces: $S$ is minimal for all faces $F \in \partial(S) \setminus \Xi(S)$, and consequently other top-dimensional cells incident to $F$ are in $W_3(N) \setminus W_3(n),$ for some $N > n.$

Finally the contraction within each cube is continuous since it can be decomposed into fixed-source projections, and mappings of convex hulls defined by linear contractions on the defining vertex set.

\subsection{Properties of the Contraction}\label{SubSect:Specification}
Having defined the contraction $h_3:[0,1] \times W_3 \rightarrow W_3$, in the next theorem we explore some of its properties including how it interacts with the action of the full arithmetic subgroup $GL_3(\mathbb{Z})$.

\begin{Theorem}\label{Theorem:MainTheorem}
Let $h_3$ be as in Definition \ref{Definition:h3}.  Then,
\begin{enumerate}[leftmargin=0.3 in]
  \item $h_3$ is a ``local lift'' of the lower rank contractions defined on the Serre trees inside the Levi components of a subset of the Bore-Serre faces at infinity;
  \item $h_3 \left( [0,1], \cdot\right)$ preserves $\mathcal{C},$ where $\mathcal{C}$ is the set of 0-, 1-, 2-, and 3-cells in the union of all $GL_3(\mathbb{Z})$-translates of the four fundamental tetrahedra illustrated in Figure \ref{fig:FundamentalDomainSouleCube}.\label{Theorem:MainTheorem:InvariancePart}
\end{enumerate}
\end{Theorem}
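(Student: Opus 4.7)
The plan is to handle the two parts in sequence: part (2) is essentially combinatorial and reduces to case-checking the construction of §\ref{SubSect:BetterTrajectories}, while part (1) requires translating $h_3$-trajectories into $h_2$-trajectories via the projection data introduced in §\ref{AptsAndWRR}.

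For part (2), I would begin by noting that both the cellular structure $\mathcal{C}$ and the algorithm producing $h_3$ are $GL_3(\mathbb{Z})$-equivariant in the sense that the trajectories prescribed in §\ref{SubSect:BetterTrajectories} depend only on the combinatorial type of the incidence of $\Xi(S)$ in a generic cube $S$ (together with the linear structure inherited from Figure \ref{fig:FundamentalDomainSouleCube}). Consequently it suffices to check within the fundamental cube that the image under $h_3([0,1],\cdot)$ of a $k$-cell of the tetrahedral refinement lies in a union of $(k{+}1)$-cells of the same refinement. The key observation is that every projection source used in the algorithm — the center $o$, the cube corners, the centers of the hexagonal faces, the midpoints of edges from $o$, and the auxiliary point $(-1,-1,-1)$ in Stage II Case II — is a vertex of (a natural refinement of) the tetrahedral decomposition; furthermore the convex-hull and linear-interpolation devices in Stages I and III manifestly carry simplices to simplices. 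The verification therefore amounts to walking through the finitely many cases of minimal-set configurations appearing in Lemma \ref{NumberMinimal} and in the proof of connectedness of $\Xi(S)$.

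For part (1), the starting point is the identification from §\ref{AptsAndWRR}: on a generic cube $S$ decorated by $\{v_1,v_2,v_3\}$, the coordinates $(u,v,w)$ parametrize the positions of the three projections $p_{ij}|_{\mathbb{R}\{v_i,v_j\}}$ along the fundamental arcs of the Serre trees $W_2$ inside the Levi components of the maximal parabolics $P_{\{v_i,v_j\}}$. These three parabolics are precisely the ``subset of Borel–Serre faces'' to which the theorem refers. Under this identification, contracting $S$ toward a minimal hexagonal face $\{v_1,v_2,v_3,v_i\pm v_j\}$ amounts to driving the coordinate corresponding to $\mathbb{R}\{v_i,v_j\}$ to $\pm 1$ while leaving the other two coordinates free. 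I would then verify that this driving projects, under $p_{ij}|_{\mathbb{R}\{v_i,v_j\}}$, to the rank-one contraction of Definition \ref{Definition:h2} moving the arc decorated by $\{v_i,v_i\pm v_j\}$ (or $\{v_j,v_i\pm v_j\}$) to its adjacent smaller arc; since the algorithm's source points depend only on the $(u,v,w)$-data, this compatibility follows for each hexagonal step.

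The main obstacle is the case in which $\Xi(S)$ contains a minimal triangular face, for then no single coordinate moves in isolation and all three Serre-tree projections evolve simultaneously. Here the correct reading of ``local lift'' is that each of the three projected trajectories, taken separately, is an $h_2$-trajectory up to reparametrization and restricted to a subinterval of $[0,1]$. To establish this I would verify that the linear projection from $(-1,-1,-1)$ used in Stage II Case II, as well as the projections from cube corners in Stage I and the convex-hull contractions in Stage III, all push down under each $p_{ij}|_{\mathbb{R}\{v_i,v_j\}}$ to linear motions of the corresponding arc toward its neighbour. Given the linearity of every ingredient of the algorithm and the explicit form of $h_2$, this should reduce to a bookkeeping exercise, but it is the step that demands the most care and where a more invariant recasting of the algorithm would be most valuable for the forthcoming higher-rank generalization.
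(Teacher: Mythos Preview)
Your plan for part~(2) matches the paper's argument: both reduce to tracing the construction of \S\ref{SubSect:BetterTrajectories} and checking that $k$-cells of the tetrahedral refinement sweep out unions of $(k{+}1)$-cells. One minor correction: not every projection source you list is a vertex of a refinement of the tetrahedral decomposition---the point $(-1,-1,-1)$ lies outside the cube, and the points $\tilde{C}$, $\tilde{C}_1$, $\tilde{C}_2$ are deliberately placed a distance~$\delta$ outside the relevant tetrahedra. The case-check still goes through, but the reason is that these external sources are chosen so that the induced projections respect the simplicial faces, not that they are themselves vertices.

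For part~(1) you are reading ``local lift'' more strongly than the paper does, and this is where your proposal diverges. The paper's proof is purely combinatorial: it only checks that the rule deciding \emph{which} of the two opposing hexagons $\{v_1,v_2,v_3,v_i+v_j\}$ and $\{v_1,v_2,v_3,v_i-v_j\}$ lies in $\Xi(S)$---namely the norm comparison of $v_i\pm v_j$ against $v_i,v_j$---is the same rule that Algorithm~\ref{Algorithm:W2} uses to decide which vertex of the arc decorated by $\{v_i,v_j\}$ lies in $\Xi(A)$. That is the entire content of ``local lift'' as the paper proves it. There is no claim that the actual trajectories in $S$ project, under $p_{ij}|_{\mathbb{R}\{v_i,v_j\}}$, to $h_2$-trajectories, and in particular the paper never confronts the triangular-face obstacle you raise. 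Your stronger reading would be a more satisfying statement, but it is not what the theorem asserts nor what the proof establishes; the ``main obstacle'' you identify simply does not arise under the paper's interpretation.
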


\begin{proof}
Part (i) of the theorem follows from the fact that Algorithm \ref{Algorithm:W3} is a generalization of Algorithm \ref{Algorithm:W2}.  In particular consider a top dimensional cell in the well-rounded retract decorated by $\{v_1, v_2, v_3\}$.  Consider the two opposing hexagons decorated by $\{v_1, v_2, v_3, v_2 - v_3\},$ and $\{v_1, v_2, v_3, v_2+v_3\}$ respectively.  As in \S \ref{AptsAndWRR},  quadratic forms in $S$ remain well-rounded when, under an appropriate transformation, they are considered as quadratic forms on the 2-dimensional real subspace stabilized by elements in $P = P_{\{v_2, v_3\}}$.  Furthermore, the two hexagons in question project to two, distinct 0-cells incident to a single 1-cell in this lower-dimensional well-rounded retract.  The manner in which we decide which one of these hexagons belongs to the minimal set for this top-dimensional cell reduces to comparing the Euclidean norm of $v_2 \pm v_3$ to that of $v_2$ and $v_3.$  Compared to the contraction algorithm for $W_2 \subset D_2$, we see that this assignment to the minimal set is consistent with the manner in which we choose which vertex is minimal for the 1-cell in question, resulting in a preferred direction for the lower-dimensional contraction.  The same holds for the two other maximal parabolic subgroups conjugate to $P_{\{v_2, v_3\}}$ in the spherical apartment determined by the set $\{v_1, v_2, v_3\}.$  

Finally for part (\ref{Theorem:MainTheorem:InvariancePart}) of the theorem, let $\mathcal{C}$ be the set of $0$-, $1$-, $2$-, and $3$-cells in the union of all $GL_3(\mathbb{Z})$-translates of the four fundamental tetrahedra.  Tracing through the manner in which we contract points in $W_3$, we see that for a $0$-cell $p \in W_3,$ $h_3\left([0,1],p\right)$ is a union of $1$-cells in $\mathcal{C}.$  Similarly, if we let $l \subset W_3$ denote a $1$-cell in $\mathcal{C},$ $\cup_{p \in l}h_3\left([0,1],p\right)$ is a union of $1$- and $2$-cells in $\mathcal{C}.$  Finally for a $2$-cell $s \in \mathcal{C},$ $\cup_{p \in s}h_3\left([0,1],p\right)$ is a union of $1$-, $2$-, and $3$-cells in $\mathcal{C}.$  This concludes the proof of part (\ref{Theorem:MainTheorem:InvariancePart}).
\end{proof}

\section{Applications to Cohomology} \label{Section:Applications}
We begin by recalling the notion of Eilenberg-MacLane group cohomology for a general group $G$ with coefficients in a $G$-module $(\rho, V).$  In particular, we consider the complex $C^\bullet\left(G, V\right)$ of all functions $f:G^\bullet \rightarrow V.$  To this complex we associate the co-boundary operator $d^\bullet:C^\bullet\left(G, V\right) \rightarrow C^{\bullet+1}\left(G, V\right)$ defined as:
\begin{align*}
(d^if)(g_1,\dots,g_{i+1}) &= \rho(g_1)f(g_2, \dots, g_{i+1})\\ &+ \sum_{k=1}^{k=n}(-1)^kf(\dots, g_{k-1},g_{k}g_{k+1}, g_{k+2}, \dots) \\&+ (-1)^{n+1}f(g_1, \dots, g_n).
\end{align*}
We state without proof that this is a co-chain map, namely $(d^{i+1}\circ d^i) f = 0$.  Therefore, in line with standard notation, we say 
$$Z^i\left(G, V\right) = \left\{f \in \text{Ker}\left(d^i\right)\right\} \subset C^i\left(G, V\right)$$ are the $i$-co-chains.  Again, following standard conventions, we define co-boundaries as,
$$B^i\left(G, V\right) = \left\{f \in \text{Im}\left(d^{i-1}\right)\right\} \subseteq C^i\left(G, V\right), i\ge1,$$ and $B^0(G, V)=0.$  With this notation in place, we can compute Eilenberg-MacLane group cohomology as,
$$H^\bullet \left(G, V\right) = Z^\bullet\left(G, V\right) \slash B^\bullet\left(G, V\right).$$

\subsection{Constructing Eilenberg-MacLane co-cycles}
Consider the map
\begin{align}\label{DupontIso}
\Omega^\bullet\left(GL_m(\mathbb{Z})\backslash D_m, \mathbb{V}\right) &\rightarrow C^\bullet\left(GL_m(\mathbb{Z}),V\right),\\
\notag \omega &\rightarrow \phi_\omega
\end{align}
where $\phi_\omega: GL_m(\mathbb{Z})^k \rightarrow V$ is defined as
$$\phi_\omega(\gamma_1, \dots, \gamma_k) = \int_{\sigma(\gamma_1, \dots,\gamma_k)}\omega,$$
for $\sigma$ a \emph{filling} of the symmetric space $D_m.$ 

\begin{definition}{\cite[Ch.9]{MR0500997}}
A \emph{filling} of $D_m$ is a family of $C^\infty$ singular simplices \\
$\sigma(\gamma_1, \dots, \gamma_k) \colon \Delta^k \rightarrow D_m$, $\gamma_1,\dots,\gamma_k \in GL_m(\mathbb{Z})$, $k=0,1,2,\dots,$
such that for $k= 1, 2, \dots,$
$$\sigma(\gamma_1, \dots, \gamma_k) \circ \xi^i = \begin{cases}\begin{array}{lll} \gamma_1 \sigma(\gamma_2, \dots, \gamma_k) & &i = 0,\\
\sigma(\gamma_1, \dots, \gamma_i\gamma_{i+1}, \dots,\gamma_k) & & 0<i<k,\\\sigma(\gamma_1, \dots, \gamma_{k-1}) & & i = k. \end{array}\end{cases} $$
\end{definition}
Above, we used $\xi^i$ to denote the standard face operators associated with the regular $k$-simplex in $\mathbb{R}^{k+1}.$  Exploiting the properties of the filling outlined in the definition above, one can show as in \cite{MR0500997} that the map \eqref{DupontIso} descends to an isomorphism on cohomology:
$$H^\bullet\left(GL_m(\mathbb{Z})\backslash D_m, \mathbb{V}\right) \stackrel{\sim}{\rightarrow} H^\bullet\left(GL_m(\mathbb{Z}), V\right).$$

A key ingredient in the above recipe for constructing Eilenberg-MacLane co-cycles is the choice of a filling of $D_m$.
\subsection{As it relates to a contraction of the well-rounded retract}\label{Contraction to Filling}

In this section we use the combinatorial structure of the well-rounded retract in order to define a filling of $D_m$.  

More specifically, let $h_m:I\times W_m \rightarrow W_m$ generalize the contraction of $W_3$ defined in Defintion \ref{Definition:h3}, with $h_m(1, W_m)=o.$  We define $\sigma(\gamma_1, \dots, \gamma_p)$ inductively by
\begin{align*}
\sigma(\gamma_1, &\dots, \gamma_p)(t_0, \dots, t_p) =\\ &=\begin{cases}\begin{array}{ll}o & p=0\\h_m\left(t_1,\gamma_1\cdot o\right) & p=1\\h_m\left(1-t_0,\gamma_1\cdot \sigma(\gamma_2, \dots, \gamma_p)\left(\frac{t_1}{1-t_0}, \dots, \frac{t_p}{1-t_0}\right) \right) & p>1. \end{array}\end{cases}
\end{align*}
It follows from \cite[Ch.9]{MR0500997} that when defined as above, $\sigma$ is a filling of $D_m.$

\begin{Theorem}
Let $f \in Z^i\left(GL_m(\mathbb{Z}),V\right)$ be a closed co-cycle for $GL_m(\mathbb{Z})$ taking values in the $GL_m$-module $V.$  Then there exists a closed co-cycle $\tilde{f} \sim f$, such that all values of $\tilde{f}$ can be computed as integrals over compact cells in the well-rounded retract $W_m.$
\end{Theorem}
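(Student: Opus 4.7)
The plan is to realize $\tilde{f}$ as the image under \eqref{DupontIso} of a closed differential form $\omega$, and then use Theorem \ref{Theorem:MainTheorem}(\ref{Theorem:MainTheorem:InvariancePart}) to argue that each filling simplex $\sigma(\gamma_1,\dots,\gamma_p)$ has image that decomposes as a finite union of compact cells of $\mathcal{C}$ inside $W_m$.

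First, using the isomorphism on cohomology $H^\bullet\!\left(GL_m(\mathbb{Z})\backslash D_m,\mathbb{V}\right) \stackrel{\sim}{\to} H^\bullet\!\left(GL_m(\mathbb{Z}), V\right)$ induced by \eqref{DupontIso}, I would choose a closed form $\omega \in \Omega^i\!\left(GL_m(\mathbb{Z})\backslash D_m,\mathbb{V}\right)$ whose associated Eilenberg--MacLane cocycle $\phi_\omega$ represents the cohomology class of $f$, and then set $\tilde{f} := \phi_\omega$, so that $\tilde{f} \sim f$.

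Second, I would argue by induction on $p$ that the image of $\sigma(\gamma_1,\dots,\gamma_p)\colon \Delta^p \to D_m$ lies in a finite union of cells of $\mathcal{C}\subset W_m$. For $p=0$, $\sigma$ is the $0$-cell $o$. For $p=1$, $\sigma(\gamma_1)$ traces out the trajectory $h_m(\cdot,\gamma_1\cdot o)$; since $\gamma_1\cdot o$ is a vertex of $\mathcal{C}$ by equivariance, Theorem \ref{Theorem:MainTheorem}(\ref{Theorem:MainTheorem:InvariancePart}) guarantees its image is a union of $1$-cells of $\mathcal{C}$. For the inductive step, unwinding the recursive definition
$$\sigma(\gamma_1,\dots,\gamma_p)(t_0,\dots,t_p) = h_m\!\left(1-t_0,\,\gamma_1\cdot\sigma(\gamma_2,\dots,\gamma_p)\!\left(\tfrac{t_1}{1-t_0},\dots,\tfrac{t_p}{1-t_0}\right)\right),$$
the inductive hypothesis applied to $\sigma(\gamma_2,\dots,\gamma_p)$, combined with the fact that $GL_m(\mathbb{Z})$ acts cellularly on $W_m$, shows that $\gamma_1\cdot\sigma(\gamma_2,\dots,\gamma_p)$ has image in a union of cells of $\mathcal{C}$; applying Theorem \ref{Theorem:MainTheorem}(\ref{Theorem:MainTheorem:InvariancePart}) cell-by-cell then promotes this to a union of cells one dimension higher, still in $\mathcal{C}$. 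In particular the image sits entirely in $W_m$, which is the key point for the reduction.

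Third, because each cell of $\mathcal{C}$ is a compact, closed, convex linear set, the integral
$$\tilde{f}(\gamma_1,\dots,\gamma_i) \;=\; \int_{\sigma(\gamma_1,\dots,\gamma_i)} \omega$$
decomposes as a finite signed sum of integrals of $\omega$ over compact $i$-cells of $\mathcal{C}$ inside $W_m$; contributions from portions of $\Delta^i$ that $\sigma$ collapses onto lower-dimensional skeleta vanish for dimensional reasons. This exhibits every value of $\tilde{f}$ as a finite combination of integrals over compact cells of $W_m$, as required.

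The main obstacle is the inductive step in the second stage: one must verify that the reparameterization $(t_0,\dots,t_p)\mapsto(t_1/(1-t_0),\dots,t_p/(1-t_0))$, the cellular $\Gamma$-action, and the cell-preserving contraction $h_m$ compose compatibly so that the cellular property survives at each level of the recursion. This ultimately reduces to (i) the invariance statement of Theorem \ref{Theorem:MainTheorem}(\ref{Theorem:MainTheorem:InvariancePart}), and (ii) a careful bookkeeping verifying that $h_m$ applied to a $\Gamma$-translate of a union of cells of $\mathcal{C}$ sweeps out a union of cells of $\mathcal{C}$ of dimension at most one higher, with the possible collapse of degenerate faces of $\Delta^p$ absorbed into lower-dimensional skeleta.
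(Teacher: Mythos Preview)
Your argument is correct, but it proves more than the Theorem actually asserts and, correspondingly, does more work than the paper does here. In the paper the Theorem is not given a formal proof; it is presented as an immediate consequence of the construction in \S\ref{Contraction to Filling}: once the filling $\sigma$ is built from the contraction $h_m$, each simplex $\sigma(\gamma_1,\dots,\gamma_i)$ automatically has image inside $W_m$ (because $h_m$ takes values in $W_m$), and that image is compact since $\Delta^i$ is. Choosing $\omega$ via the Dupont isomorphism and setting $\tilde f=\phi_\omega$ then finishes the statement. No appeal to Theorem~\ref{Theorem:MainTheorem}(\ref{Theorem:MainTheorem:InvariancePart}) or to the cell collection $\mathcal{C}$ is needed at this point.

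What you have written---the inductive argument that the image of $\sigma(\gamma_1,\dots,\gamma_p)$ decomposes into finitely many cells of $\mathcal{C}$---is the right idea, but it is really the content of the \emph{Corollary} that follows the Theorem, not of the Theorem itself. Two remarks in that direction: first, Theorem~\ref{Theorem:MainTheorem} and the collection $\mathcal{C}$ are stated only for $m=3$, so your cell-by-cell induction as written applies in that case (and, with the obvious modifications, for $m=2$), whereas the Theorem is stated for general $m$; for arbitrary $m$ the paper only uses that $h_m$ lands in $W_m$. Second, your ``main obstacle'' about the reparameterization is not an obstacle for the Theorem: continuity of $h_m$ and compactness of $\Delta^p$ already give compactness of the image in $W_m$, independent of any cell decomposition. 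Save the careful bookkeeping for the Corollary, where it genuinely matters.
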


The importance in this result is also computational in nature.  More specifically, we could have arrived at the above result simply by quoting the fact that the well-rounded retract is a deformation retract of a contractible space and as such is contractible itself.  However, by developing an explicit contraction of $W_m$ we have built an environment in which we can concretely  describe the cells we wish to integrate in order to compute the values of the group co-cycles.  We can take this one step further, and lean on part (2) of Theorem \ref{Theorem:MainTheorem} to present the following result in the case of $m=3.$

\begin{corollary}
 Let $m=3$ and $\phi_\omega$ be as in \eqref{DupontIso}.  All values of $\phi_\omega$ can be recovered from four vectors $v_1^{\omega}, v_2^{\omega}, v_3^{\omega}, v_4^{\omega} \in V.$
\end{corollary}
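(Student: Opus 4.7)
The plan is to combine part (\ref{Theorem:MainTheorem:InvariancePart}) of Theorem \ref{Theorem:MainTheorem} with the equivariance condition $L_\gamma^*\omega = \rho(\gamma)\omega$ built into the definition of $\Omega^\bullet(GL_3(\mathbb{Z})\backslash D_3, \mathbb{V})$. The first gives an a priori control on which geometric cells can appear in the image of a filling simplex, while the second allows us to collapse all $GL_3(\mathbb{Z})$-translates of a given cell to a single integral against $\omega$. I expect the four vectors to arise as integrals of $\omega$ over the four fundamental tetrahedra of Figure \ref{fig:FundamentalDomainSouleCube}.

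The first step is to unwind the inductive definition of $\sigma(\gamma_1,\dots,\gamma_p)$. For $p = 1$ the image $h_3([0,1],\gamma_1\cdot o)$ is already a union of cells of $\mathcal{C}$ by Theorem \ref{Theorem:MainTheorem}(\ref{Theorem:MainTheorem:InvariancePart}); the cone construction $\sigma(\gamma_1,\dots,\gamma_p) = h_3(1 - t_0,\,\gamma_1\cdot \sigma(\gamma_2,\dots,\gamma_p))$ then sweeps this lower-dimensional union through $h_3$ one more time, and by part (\ref{Theorem:MainTheorem:InvariancePart}) of Theorem \ref{Theorem:MainTheorem} the result stays inside $\mathcal{C}$. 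By induction, the image of $\sigma(\gamma_1,\dots,\gamma_p)$ decomposes as a finite union of cells of $\mathcal{C}$, each with a well-defined orientation inherited from $\Delta^p$.

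The second step is to fix representatives $T_1,T_2,T_3,T_4$ for the $GL_3(\mathbb{Z})$-orbits of the top-dimensional cells of $\mathcal{C}$, namely the four fundamental tetrahedra, and set
\[
v_i^\omega := \int_{T_i}\omega \in V, \qquad i = 1,2,3,4.
\]
Since every $3$-cell of $\mathcal{C}$ is some $\gamma\cdot T_i$ with $\gamma \in GL_3(\mathbb{Z})$, the change-of-variables formula together with $L_\gamma^*\omega = \rho(\gamma)\omega$ gives $\int_{\gamma T_i}\omega = \rho(\gamma)\,v_i^\omega$ up to an orientation sign. Combining this with the decomposition from the first step yields an explicit formula
\[
\phi_\omega(\gamma_1,\dots,\gamma_k) = \sum_\alpha \epsilon_\alpha\,\rho(g_\alpha)\,v_{i_\alpha}^\omega,
\]
where the triples $(\epsilon_\alpha, g_\alpha, i_\alpha) \in \{\pm 1\}\times GL_3(\mathbb{Z})\times\{1,2,3,4\}$ are determined combinatorially from $(\gamma_1,\dots,\gamma_k)$ and from the contraction $h_3$. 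Only $k$-dimensional pieces of the image contribute to the integral of a $k$-form, so lower-degree $\omega$ involves at most these same four vectors (and fewer of them generically), which is consistent with the stated bound.

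The main obstacle is the combinatorial bookkeeping: turning the qualitative statement ``$\sigma(\Delta^k)$ is a union of cells in $\mathcal{C}$'' into an explicit enumeration of the multi-set $\{(\epsilon_\alpha, g_\alpha, i_\alpha)\}$, together with a careful tracking of orientations across cells that are glued along common faces. This is a routine but tedious traversal of the recursion defining $\sigma$, guided by the stage-I/II/III description of trajectories in \S\ref{SubSect:BetterTrajectories}; once the enumeration is fixed, the equivariance of $\omega$ reduces the value of $\phi_\omega$ at any tuple $(\gamma_1,\dots,\gamma_k)$ to a $\mathbb{Z}$-linear combination of $\rho(GL_3(\mathbb{Z}))$-translates of the four vectors $v_1^\omega,\dots,v_4^\omega$, as claimed.
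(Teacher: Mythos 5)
Your argument follows exactly the route the paper intends: the paper states this corollary without proof, remarking only that it "leans on part (2) of Theorem \ref{Theorem:MainTheorem}," and your proposal is precisely the natural expansion of that remark --- the filling simplices land in $\mathcal{C}$, the four fundamental tetrahedra represent the $GL_3(\mathbb{Z})$-orbits of its top cells, and the equivariance $L_\gamma^*\omega = \rho(\gamma)\omega$ collapses every integral to a $\rho(\gamma)$-translate of one of the $v_i^\omega = \int_{T_i}\omega$. The only point to watch is your closing remark about lower-degree forms, where the relevant count is the number of $GL_3(\mathbb{Z})$-orbits of $k$-cells of $\mathcal{C}$ rather than of $3$-cells, but this does not affect the top-degree argument.
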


An in-depth look at the ramifications of this result is forthcoming in \cite{OliverModularSymbols}, where we explore the case of $V=\bSym^n(V_m)$ for $m=2,$ and $3.$  In these special cases, our approach should be compared to similar constructions in the theory of modular symbols such as those in \cite{MR0120372} and \cite{MR2289048}.  Our methodology completes this framework by also treating symbols associated to non-cuspidal differential forms.

\section*{Acknowledgements}
I am extremely grateful to my advisor Les Saper for his infinite patience in explaining the intricacies of symmetric spaces.  I would like to thank Richard Hain who first brought period polynomials to my attention.  I would also like to thank Steven Zucker for reading an early draft of this paper and giving me some very helpful feedback.

\section*{Appendix}
The value in the top right corner of Table \ref{Tab:SouleComplexIncidence} in \S \ref{Subsection:GL3} refers to the number of top-dimensional cells (cubes) incident to a given vertex of the well-rounded retract.  Since all 0-cells in the well-rounded retract are $SL_3(\mathbb{Z})$-equivalent, it suffices to write down sixteen different cubes that contain the vertex at the intersection of the faces $F_2, F_4,$ and $F_{10}$, as seen in Figure \ref{fig:SouleCubeMapleDecorated}.  This particular vertex is decorated by the set $\{e_1, e_2, e_3, e_1 + e_3, e_1 + e_2, e_2 - e_3\}.$  We provide a list of decorations enumerating all cubes containing the vertex in question:
$$\{ e_1 , e_2 , e_3 \},\{ e_1 , e_2 , -e_2+e_3 \},\{ e_1 , e_2 , e_1+e_3 \}, \{ e_1 , e_3 , -e_2+e_3 \},$$ $$\{ e_1 , e_3 , -e_1-e_3 \}, \{ e_2 , e_3 , e_1+e_2 \}, \{ e_2 , e_3 , e_1+e_3 \},\{ e_1 , e_2 - e_3 , e_1+e_3 \},$$ $$ \{ e_1 , e_2 - e_3 , e_1+e_2 \}, \{ e_1 , e_1 + e_3 , -e_1-e_2 \}, \{ e_2 , e_2 - e_3 , -e_1-e_2 \},$$ $$\{ e_2 , e_2 - e_3 , -e_1-e_3 \},\{ e_2 , e_1 + e_2 , -e_1-e_3 \}, \{ e_3 , e_2 - e_3 , -e_1-e_3 \},$$ $$ \{ e_3 , e_2 - e_3 , -e_1-e_2 \}, \{ e_3 , e_1 + e_2 , -e_1-e_3 \}.$$
\bibliographystyle{alpha}
\bibliography{References}

\begin{thebibliography}{EVGS02}

\bibitem[AGM10]{MR2630015}
Avner Ash, Paul~E. Gunnells, and Mark McConnell.
\newblock Cohomology of congruence subgroups of {${\rm SL}_4(\Bbb Z)$}. {III}.
\newblock {\em Math. Comp.}, 79(271):1811--1831, 2010.

\bibitem[AM97]{MR1480546}
Avner Ash and Mark McConnell.
\newblock Cohomology at infinity and the well-rounded retract for general
  linear groups.
\newblock {\em Duke Math. J.}, 90(3):549--576, 1997.

\bibitem[Ash84]{MR747876}
Avner Ash.
\newblock Small-dimensional classifying spaces for arithmetic subgroups of
  general linear groups.
\newblock {\em Duke Math. J.}, 51(2):459--468, 1984.

\bibitem[Dup78]{MR0500997}
Johan~L. Dupont.
\newblock {\em Curvature and characteristic classes}.
\newblock Lecture Notes in Mathematics, Vol. 640. Springer-Verlag, Berlin,
  1978.

\bibitem[EVGS02]{MR1931508}
Philippe Elbaz-Vincent, Herbert Gangl, and Christophe Soul{\'e}.
\newblock Quelques calculs de la cohomologie de {${\rm GL}_N(\Bbb Z)$} et de la
  {$K$}-th\'eorie de {$\Bbb Z$}.
\newblock {\em C. R. Math. Acad. Sci. Paris}, 335(4):321--324, 2002.

\bibitem[Gjo]{OliverModularSymbols}
Oliver Gjoneski.
\newblock Multi-variable period polynomials.
\newblock {\em in preparation}.

\bibitem[MM89]{MR1463705}
Robert MacPherson and Mark McConnell.
\newblock Classical projective geometry and modular varieties.
\newblock In {\em Algebraic analysis, geometry, and number theory ({B}altimore,
  {MD}, 1988)}, pages 237--290. Johns Hopkins Univ. Press, Baltimore, MD, 1989.

\bibitem[Sap97]{MR1470087}
Leslie Saper.
\newblock Tilings and finite energy retractions of locally symmetric spaces.
\newblock {\em Comment. Math. Helv.}, 72(2):167--202, 1997.

\bibitem[Shi59]{MR0120372}
Goro Shimura.
\newblock Sur les int\'egrales attach\'ees aux formes automorphes.
\newblock {\em J. Math. Soc. Japan}, 11:291--311, 1959.

\bibitem[Sou78]{MR0470141}
Christophe Soul{\'e}.
\newblock The cohomology of {${\rm SL}_{3}({\bf Z})$}.
\newblock {\em Topology}, 17(1):1--22, 1978.

\bibitem[Ste07]{MR2289048}
William Stein.
\newblock {\em Modular forms, a computational approach}, volume~79 of {\em
  Graduate Studies in Mathematics}.
\newblock American Mathematical Society, Providence, RI, 2007.
\newblock With an appendix by Paul E. Gunnells.

\bibitem[Yas06]{MR2305611}
Dan Yasaki.
\newblock On the existence of spines for {$\Bbb Q$}-rank 1 groups.
\newblock {\em Selecta Math. (N.S.)}, 12(3-4):541--564, 2006.

\end{thebibliography}

\end{document}